\numberwithin{equation}{section}
\newtheorem{theorem}{Theorem}[section]
\newtheorem{remark}{Remark}[section]
\newcommand{\LC}{\left(}
\newcommand{\RC}{\right)}
\newcommand{\LCB}{\left\{}
\newcommand{\RCB}{\right\}}
\newcommand{\R}{\mathbb R}
\definecolor{skyblue}{rgb}{0.85,0.85,1} 
\newcommand{\wt}{\widetilde}
\newcommand{\vE}{\mathbf{E}}
\newcommand{\vH}{\mathbf{H}}
\newcommand{\ve}{\mathbf{e}}
\newcommand{\vh}{\mathbf{h}}
\title[Near-nondiffracting accelerating beams]{Nonparaxial Near-Nondiffracting Accelerating Optical Beams}
\author[Lai]{Ru-Yu Lai}
\address{School of Mathematics, University of Minnesota, Minneapolis, MN 55455, USA}
\curraddr{}
\email{rylai@umn.edu }
\author[Zhou]{Ting Zhou}
\address{Department of Mathematics, Northeastern University, Boston, MA 02115, USA}
\curraddr{}
\email{t.zhou@neu.edu}
\thanks{  }
\date{\today}
\begin{document}

\begin{abstract}
    We show that new families of  accelerating and almost nondiffracting  beams (solutions) for Maxwell's equations can be constructed. These are complex geometrical optics (CGO) solutions to Maxwell's equations with nonlinear limiting Carleman weights. They have the form of wave packets that propagate along circular trajectories while almost preserving a transverse intensity profile. We also show similar waves constructed using the approach combining CGO solutions  and the Kelvin transform. 
\end{abstract}
\maketitle

\tableofcontents

\section{Introduction}
The research on nondiffracting accelerating optical beams has attracted considerable attention since the first study and observation of such beams reported in 2007 \cite{SC2, SC1}. These are optical wave packets that propagate along a curved trajectory while preserving its transverse amplitude structure. 
The first of such beams, known as the Airy beam, traces back to the context of quantum mechanics \cite{BB}, as a solution to the free-potential Schr\"odinger equation. In optics \cite{SC2, SC1}, the Airy beam is understood as a solution to the paraxial wave equation,
which is a good approximation of the propagation dynamics when the beam trajectory is limited to small (paraxial) angles $\sim10^\circ$. In this case, the Airy beam propagates along a parabolic trajectory while maintaining its intensity profile. 
The desirable feature of simultaneous shape-preserving and self-bending has invoked many intriguing applications including inducing curved plasma filaments \cite{PKMSC}, synthesizing versatile bullets of light \cite{CRCW}, carrying out autofocusing and supercontinuum experiments \cite{APM}, manipulating microparticles \cite{BMD} and so on.
However the spatial acceleration of a beam will make the bending angle continue increasing, eventually, the wave packet falls off into the non-paraxial regime and no longer nondiffracting.
In \cite{KBNS}, the authors found solutions to the Maxwell's equations in free space, that propagate along semicircular trajectories without losing the intensity of their main lobes after a large angle ($\sim 90^\circ$) bending. Roughly speaking, in the two dimensional transverse electric (TE) or transverse magnetic (TM) polarized cases, they examine the spherical harmonic expansion for the solution to the Helmholtz equation, splitting the integral for the Bessel function into two parts corresponding to both forward and backward propagations. The non-diffracting accelerating beam is the forward Bessel wave packets with apodization on the initial axis.  The three-dimensional accelerating beams in \cite{KBNS} are composed by a superposition of scalar solutions for the TE \slash TM polarization, multiplied by a plane wave in the direction perpendicular to the plane the accelerating trajectory lies in. 
Another approach to obtain 3D beams is to implement directly the splitting approach to the 3D spherical harmonic expansions in spherical coordinates (instead of cylindrical coordinates), as indicated in \cite{AMMK, AB}. 
This was generalized in \cite{AMMK} to find non-paraxial beams propagating along elliptic trajectories, that is, the magnitude of acceleration is no longer constant.

It is also explained in the previously mentioned physics literatures that the nondiffracting accelerating wave packets in free space, e.g., the Airy beam, is not contradicting the Ehrenfest's theorem, because the transverse intensity is not square integrable, hence the beam does not have a transverse center of mass. In experiments, see \cite{AB, SC2}, the localized beams with finite energy are induced by applying an exponential truncation (apodization). They still exhibit the key features over long distance propagation in spite of the fact that the center of gravity of these wave packets remains constant (an outcome of Ehrenfest's theorem) and diffraction eventually takes over. 

Non-diffracting accelerating beams are shown to exist in other type of media. In \cite{Schley}, these beams, as analytic solutions of Maxwell's equations with linear or nonlinear losses, propagate in absorbing media while maintaining their peak intensity. While the power such beams carry decays during propagation, the peak intensity and the structure of their main lobe region are maintained over large distances. 
Such loss-proof beams, when launched in vacuum or in lossless media, display exponential growth in peak intensity. This is achieved through the property of self-healing of non-diffracting beams, which allows energy transfer from the oscillating tail of the beam to the main lobe region. The self-healing properties, as a result of self transverse acceleration, is studied in \cite{BSDC}. In \cite{BNKS}, the idea is generalized to construct shape-preserving wave packets in curved space that propagate along non-geodesic trajectories. 

In this paper, we show existence of other accelerating and near-nondiffracting solutions for non-paraxial equations. More precisely, we construct the Complex Geometrical Optics (CGO) solutions with nonlinear Limiting Carleman Weights (LCW).

In the first approach, we consider the propagation in heterogeneous media. 
Let $\Omega$ be a bounded domain in $\mathbb{R}^3$ with smooth boundary.
We consider the time dependent Maxwell's equations
\[
        \nabla_x \times  \mathbf{E}(x,t)+\mu\displaystyle\frac{\partial\mathbf{H}(x,t)}{\partial t}=0, \qquad
        \nabla_x \times  \mathbf{H}(x,t) -\varepsilon\displaystyle\frac{\partial\mathbf{E}(x,t)}{\partial t}=\sigma\mathbf{E}(x,t),
\]
where $\varepsilon-\varepsilon_0, \mu-\mu_0, \sigma\in C_0^{2}(\Omega)$ for some positive constants $\mu_0$ and $\varepsilon_0$, and the corresponding time-harmonic Maxwell's equations of 
$$ 
    \vE(x)=\vE(x,t)e^{i\omega t},\qquad \vH(x)=\vH(x,t)e^{i\omega t},
$$
given by
\begin{equation} \label{eqn:max_th}
\nabla\times\vE-i\omega\mu\vH=0,\qquad 
\nabla\times\vH+i\omega\gamma\vE=0,
\end{equation}
where $\gamma=\varepsilon+i\sigma/\omega$ and $\omega>0$ is the angular frequency. 

The beam we construct is based on the so-called complex geometrical optics (CGO) solutions to Maxwell's equations \eqref{eqn:max_th}. These solutions were widely used in solving inverse problems arising from imaging modalities using electricity, electromagnetic waves and so on.  
Taking Electric Impedance Tomography (EIT) for example, the inverse problem is to reconstruct the conductivity function $\sigma(x)$ in a conductivity equation $\mbox{div}(\sigma\nabla u)=0$ in a medium $\Omega$ from the boundary Dirichlet-to-Neumann (voltage-to-current) map $u|_{\partial\Omega}\mapsto \nu\cdot\sigma\nabla u|_{\partial\Omega}$. This is also known as the Calder\'on problem and was generalized to an inverse problem for Maxwell's equations \eqref{eqn:max_th}, namely, to reconstruct the parameter set $(\mu,\varepsilon,\sigma)$ from boundary impedance map given by $\nu\times\vE|_{\partial\Omega}\mapsto\nu\times\vH|_{\partial\Omega}$. Here $\nu$ denotes the unit outer normal vector on the boundary $\partial\Omega$. These inverse problems were studied extensively (see \cite{U} for a detailed review) while through all the analysis, the CGO solutions introduced in \cite{SU1} plays a key role. An example of such solutions to the conductivity equation $\mbox{div}(\sigma\nabla u)=0$ is given by 
\begin{equation}\label{eqn:CGO_cond}
u=\sigma^{-1/2}e^{\zeta\cdot x}\left(1+\psi(x)\right),
\end{equation}
where $\zeta\in\mathbb C^n$ ($n$ is the spatial dimension) satisfying $\zeta\cdot\zeta=0$ and $\psi\in L^2$ satisfies a decaying property with respect to $|\zeta|\gg1$. More precisely, 
\begin{equation}\label{eqn:CGO_small}\|\psi\|_{L^2}\leq \frac{C}{|\zeta|}.\end{equation}
Let $\tau>0$ and $\alpha>0$. In $\mathbb R^3$, set $\zeta=(\tau,i\sqrt{\tau^2-\alpha^2}, i\alpha)^t$  (note that $\zeta\cdot\zeta=0$ and $|\zeta|=\sqrt 2\tau$).  
Assume that $\sigma$ is independent of $x_3$. The above decaying property implies that $e^{-i\omega t}u$, with $u$ given by \eqref{eqn:CGO_cond}, is roughly a plane wave packet propagating nondiffractingly along $\hat e_3$ direction for $\tau$ large. The transverse profile of the beam is oscillatory in $\hat e_2$ and exponential in $\hat e_1$. (Compared to the Airy beam, the tail of the CGO plane beam does not decay with respect to $x_2$). 
Note that the CGO solution is a high energy near-nondiffracting wave packet for $\tau$ is large. 

Using the Liouville transform $u=\sigma^{-1/2}v$, the conductivity equation $\mbox{div}(\sigma\nabla u)=0$ is reduced to the Schr\"odinger equation $\Delta v+q(x)v=0$ with potential $q=- \Delta\sigma^{1/2}/ \sigma^{1/2}\in L^\infty(\Omega)$. The exsitence of a global CGO solution $u$ of the form \eqref{eqn:CGO_cond} is equivalent to solving an equation of $\psi$ in $\R^3$ given by
\begin{equation}\label{eqn:CGO_psi}
\Delta\psi+2\zeta\cdot\nabla\psi+q(x)\psi=-q(x).
\end{equation} 
The Faddeev's kernel defines an inverse of $(\Delta+2\zeta\cdot\nabla)$ by
\[G_\zeta(f)=\mathcal F^{-1}\left(\frac{\mathcal F f}{-|\xi|^2+2i\zeta\cdot\xi}\right),\]
where $\mathcal F$ and $\mathcal F^{-1}$ denote the Fourier transform and its inverse, respectively. Furthermore, it is shown in \cite{SU1} that for $\tau$ large enough,
\begin{equation}\label{eqn:Fadd_est}
\|G_\zeta\|_{L^2_{\delta}\rightarrow L^2_{-\delta}}\lesssim \tau^{-1}
\end{equation}
for $\delta>-\frac{1}{2}$, where $L^2_\delta$ is the closure of $C_0^\infty(\R^n)$ with respect to the weighted norm
$\|u\|_{L^2_\delta}=\|(1+|x|^2)^{\delta/2}u\|_{L^2}$. Then the existence of $\psi$ to \eqref{eqn:CGO_psi} and the estimate \eqref{eqn:CGO_small} are corollary of \eqref{eqn:Fadd_est} and the fact that $q(x)$ can be viewed as a compactly supported $L^\infty(\R^n)$ function. 

For Maxwell's equations, by introducing two auxiliary scalar fields $\Phi$ and $\Psi$ and a Liouville type rescaling, the first order system can be reduced to a Dirac system $(P-k+W)Y=0$. A solution $Y$ of the Dirac system gives a solution to the original Maxwell system iff $\Phi=\Psi=0$. The reduction to the Schr\"odinger equation $(-\Delta-k^2+Q)Z=0$ is then due to $(P-k+W)(P+k-W^t)=-\Delta-k^2+Q$. (Detail of the reduction is outlined in Section \ref{section3D}.) This reduction was first introduced in \cite{OS} and became a standard step for construction of CGO for Maxwell's equations ever since. Once the CGO solutions are constructed for the Schr\"odinger equation, a uniqueness result is required to show that the scalar potentials $\Phi$ and $\Psi$ are vanishing. As a result, one obtains a CGO solution to Maxwell's equations of the form
\[ \vE=\gamma^{-1/2}e^{\zeta\cdot x}\tau\left(\eta+O_{\tau}(\tau^{-1})\right),\quad \vH=\mu^{-1/2}e^{\zeta\cdot x}\tau\left(\theta+O_\tau(\tau^{-1})\right)
\]
for $\tau$ sufficiently large, where $\eta$ and $\theta$ are two constant vectors in $x$ and bounded in $\tau$. Similar to the scalar case, under the assumption that the parameters are functions only depending on transverse variables, these solutions are high energy near-nondiffracting beams.

In above construction, replacing the linear phase $x\cdot\zeta$ by a nonlinear one, denoted by $\varphi(x)$, it is shown in \cite{KSjU} that solutions of CGO type can be constructed to the Schr\"odinger equations using Carleman estimates, and also to the magnetic Schr\"odinger equations as seen in \cite{FKSjU}. These admissible phases are called Limiting Carleman Weights (LCW). There are only handful LCW in three dimensions while all analytical functions in $z=x_1+ix_2$ can be used as an LCW in two dimensions. With nonlinear phases, it allows to construct solutions propagating along a curved surface (accelerate) while almost preserving its intensity profile (near-nondiffracting), usually due to a smallness estimate similar to \eqref{eqn:CGO_small}. In \cite{FKSjU, KSjU}, the CGO with the LCW $\varphi(x)=-\log|x|$ was used to solve the Calder\'on problem with partial measurements. In \cite{KSU}, the LCW $\varphi(x)=x_1$ was used to construct solutions on Riemannian manifolds with a family of admissible metrics, modeling the anisotropic materials.

In order to generalize this to construct CGO solutions to Maxwell's equations with nonlinear LCW, it requires uniqueness of the solution to the reduced Schr\"ondinger equation. In \cite{NS}, the uniqueness is obtained on a bounded domain by projecting to a function space with fixed boundary conditions. In \cite{KSU}, different approach is adopted to obtain the uniqueness, by applying the original Fourier analysis in \cite{SU1} to $x_1$ direction. The CGO solutions with either $\varphi(x)=-\log|x|$ or $\varphi(x)=x_1$ can be put into a unified framework in cylindrical coordinate $(x_1, r,\theta)$. In particular, we show that the dependence of the CGO solutions on $\theta$ can be mainly $e^{i\rho\theta}$, so that we obtain the bending electromagnetic beams.


In the second framework, we adopt a very different approach that is based on a special transformation, known as the Kelvin transform. In $\R^3$, the three-dimensional Kevin transform $K$ is a reflection with respect to a sphere, which maps hyperplanes to spheres that pass the origin. We exploit the transformation law by $K$. Moreover, we show that a non-diffracting beam that propagates along straight lines, such as a CGO solution with a linear phase, can be ``pushed forward" to generate a beam that accelerates along the circular trajectory. The acceleration is strong enough to shift the energy from the tail to the main lobe, over-compensating the intensity of the first lobe while propagating. This effect is independent of the background medium, homogeneous or heterogeneous, lossless or lossy.


The paper is organized as follows. In Section 2, we show the main steps to construct the CGO solutions to Maxwell's equations based on Carleman estimates. Section 3 is devoted to the construction based on Kelvin transform. Both sections are complemented with the demonstration of corresponding solutions.

\textbf{Acknowledgements.} Both authors thank professor Gunther Uhlmann for suggesting this problem and for useful discussions.
The research of the first author is partly supported by the AMS-Simons Travel Grant. The research of the second author is supported by the NSF grant DMS-1501049.

\section{CGO accelerating beams for Maxwell's equations}\label{section3D}
In this section, we aim to construct the almost diffraction-free beams for the inhomogeneous Maxwell's equations in dimension three. We first discuss the construction of such beams with LCW $\varphi(x) = -x_1$. The second construction is to apply another nonlinear LCW $\varphi(x) = -\log|x|$.
Numerical demonstrations of these accelerating beams are also presented.
 

For completeness, we first include a reduction, introduced in \cite{OS}, that transforms the Maxwell's system to the vectorial Schr\"odinger equation. 
  
\subsection{Reduction to the Schr\"odinger equation}
Let $\Omega$ be a bounded domain in $\mathbb{R}^3$ with smooth boundary. We consider the time-harmonic Maxwell's equations where $\varepsilon-\varepsilon_0, \mu-\mu_0, \sigma\in C_0^{2	}(\overline\Omega)$ for some positive constant $\mu_0$ and $\varepsilon_0$,
\begin{equation}\label{r:max}
        \nabla \times  \mathbf{E} -i\omega \mu  \mathbf{H}=0,\quad
        \nabla \times  \mathbf{H}+i\omega\gamma\mathbf{E}=0, 
\end{equation}
where $\gamma=\varepsilon+i\sigma/\omega$ and $\omega>0$ in $\overline\Omega$.
We remark here that the asymptotic-to-constant assumption $\mu-\mu_0, \gamma-\varepsilon_0\in C_0^2(\overline\Omega)$ is without loss of generality since any smooth parameters $\mu,\gamma$ in a bounded domain can be extended to satisfy it in a larger domain that contains the propagating region.

From (\ref{r:max}), one has the following compatibility conditions for $\mathbf{E}$ and $\mathbf{H}$
\begin{equation}\label{2:max}
       \nabla \cdot(\gamma \mathbf{E})=0, \quad
        \nabla\cdot (\mu \mathbf{H})=0.
      \end{equation}
There are eight equations in \eqref{r:max} and \eqref{2:max} for six unknowns, components of $\vE$ and $\vH$.  It allows us to augment two scalar potentials $\Phi$ and $\Psi$ to obtain
\begin{equation}\label{eqn:Max_aug}
\left\{
      \begin{split}
       D\cdot \mathbf{E}+D\alpha\cdot \mathbf{E}-\omega\mu\Phi=0, \\
      D \times  \mathbf{E} -\omega \mu \mathbf{H}+\gamma^{-1}D(\gamma\Psi)=0, \\
       D\cdot  \mathbf{H} +D\beta\cdot \mathbf{H}-\omega\gamma\Psi=0, \\
        -D \times  \mathbf{H}-\omega\gamma \mathbf{E}+\mu^{-1}D(\mu\Phi)=0,\\
      \end{split}
    \right.
\end{equation}
where $D=-i\nabla$, $\alpha=\log\gamma$ (the principal branch) and $\beta=\log\mu$. Set the unknown to be the eight vector $X=(\Phi,\vH^t,\Psi,\vE^t)^t$. We can write \eqref{eqn:Max_aug} as
\begin{equation}\label{eqn:dirac}
(P+V)X=0,
\end{equation}
where $P$ is a first order Dirac operator given by
\begin{equation}\label{eqn:PofD}
P=\left(\begin{array}{c|c} & \begin{array}{cc} & D\cdot \\D & D\times\end{array} \\\hline \begin{array}{cc} & D\cdot \\D & -D\times\end{array} & \end{array}\right)
\end{equation} 
and 
\[V=\left(\begin{array}{c|c}\begin{array}{cc}-\omega\mu &  \\ & -\omega\mu I_3\end{array} & \begin{array}{cc} & D\alpha\cdot \\D\alpha & \end{array} \\\hline \begin{array}{cc} & D\beta\cdot \\D\beta & \end{array} & \begin{array}{cc}-\omega\gamma &  \\ & -\omega\gamma I_3\end{array}\end{array}\right).
\]
Here $I_j$ is the $(j\times j)$-identity matrix.
Note that $P^2=-\Delta I_8$. Moreover, we mention a fact, later becoming very important, that Maxwell's equations \eqref{r:max} is equivalent to the Dirac system \eqref{eqn:dirac} if and only if $\Phi=\Psi=0$. 

Throughout the paper we also use the notation 
$$
X:=\left(x^{(1)}, {X^{(1)}}^t, x^{(2)}, {X^{(2)}}^t\right)^t
$$ 
for all eight-vectors $X\in\mathbb C^8$ where lower cases $x^{(j)}$ are scalar and upper cases $X^{(j)}$ are three-vectors.   Now we apply a Liouville type of rescaling by letting 
$$
     Y
     =\left(         \begin{array}{c|c}
           \mu^{1/2}I_4 &  \\ \hline
            & \gamma^{1/2} I_4\\
         \end{array}
       \right)X.
$$
Then we have
\begin{equation}\label{eqn:dirac_rescale}
(P-k+W)Y=0,
\end{equation}
where $k=\omega\mu_0^{1/2}\varepsilon_0^{1/2}$ and 
\[
W=-(\kappa-k) I_8+\frac{1}{2}
\left(
\begin{array}{cc|cc}
   &  &  & D\alpha\cdot \\
  & &   D\alpha & -D\alpha\times\\
  \hline
   & D\beta\cdot & & \\
    D\beta &  D\beta\times &  & \\
\end{array}
       \right)
\]
with $\kappa=\omega\mu^{1/2}\gamma^{1/2}$.

One can easily verify that with the rescaling, the first order terms in $(P-k+W)(P+k-W^t)$ cancel each other and we obtain 
\begin{equation}\label{P1}
   (P-k+W)(P+k-W^t)=-\Delta-k^2+Q,
\end{equation}
where the matrix potential $Q$ is an $(L^\infty(\R^3))^{8\times 8}$-valued potential with compact support in $\Omega$, whose entries involve the parameters $(\mu,\gamma)$ and their first and second derivatives. The form of $Q$ is not crucial in this construction so we omit writing the explicit formula. For readers who are interesting in the expression of $Q$, it can be found, for example, in \cite{KSU}. 
We will also need the following relations
\[
   (P+k-W^t)(P-k+W)=-\Delta -k^2+\wt Q,
\]
where $\wt Q$ shares the same property as $Q$. An extra fact of $\wt Q$ we will take advantage of is that the first and the fifth rows are diagonal. Here $W^t$ denotes the transpose of $W$.

\subsection{CGO solutions for the Schr\"odinger equation}

In this part, we outline the construction steps of CGO solutions to the Schr\"odinger equations based on the Carleman estimate as in \cite{FKSjU, KSjU}. However, the general construction does not provide uniqueness that is necessary for translating to solutions to Maxwell's equations. Therefore, our argument will bifurcate slightly for two different choices of limiting Carleman weights in order to obtain uniqueness respectively. 

A real smooth function $\varphi$ on an open set $\wt\Omega$ is a LCW if it has a non-vanishing gradient on $\wt\Omega$ and if it satisfies the condition
\[\langle\varphi''\nabla\varphi,\nabla\varphi\rangle+\langle\varphi''\xi,\xi\rangle=0\quad\mbox{ when }\quad |\xi|^2=|\nabla\varphi|^2\;\mbox{ and }\;\xi\cdot\nabla\varphi=0.\]
Roughly speaking, this is the H\"ormander condition that guarantees the solvability of the semi-classical conjugate operator $e^{\tau\varphi}(-\tau^{-2}\Delta)e^{-\tau\varphi}$ for both $\varphi$ and $-\varphi$. The tool used to obtain it is the Carleman estimate, see \cite{FKSjU, KSjU}. 

We are looking for the complex geometrical optics solutions of the form
\begin{equation}\label{eqn:CSW_Z_Sch}
     Z(x)=e^{\tau(\varphi+i\psi)}\left(A(x)+R(x)\right),
\end{equation}
where $R$ is neglectable compared to $A$ in the semi-classical sense. Here $\psi$ satisfies the eikonal equation, read as
\begin{equation}\label{eqn:CSW_eikonal}|\nabla\psi|^2=|\nabla\varphi|^2,\quad \nabla\psi\cdot\nabla\varphi=0.\end{equation}
With such $\varphi$ and $\psi$, we denote 
\[\mathcal L_{\varphi+i\psi}:=e^{-\tau(\varphi+i\psi)}(-\Delta-k^2+Q)e^{\tau(\varphi+i\psi)}.\]
Then we have 
\begin{align*}
    \mathcal L_{\varphi+i\psi}R
    & = -\mathcal L_{\varphi+i\psi}A\\
    & = (\Delta+k^2-Q)A+\tau\left[2\nabla(\varphi+i\psi)\cdot\nabla+\Delta(\varphi+i\psi)\right]A.
\end{align*} 
For the remainder term $R$ to satisfy the decaying condition with respect to $\tau$, we ask $A$ to satisfy the transport equation 
\begin{equation}\label{eqn:CSW_A}
    \left[2\nabla(\varphi+i\psi)\cdot\nabla+\Delta(\varphi+i\psi)\right]A=0.\end{equation}
Suppose $A$ is a $C^2$ solution (they exist as shown in the cases below). Also, suppose $\Omega\subset\subset\wt\Omega$ and $Q\in L^\infty(\Omega)$. Then by the Proposition 2.4 in \cite{FKSjU}, for $\tau$ large enough, there exists an $R\in H^1(\Omega)$ satisfying 
\begin{equation}\label{eqn:CSW_R_smallness}
    \|R\|_{L^2(\Omega)}+\tau^{-1}\|\nabla R\|_{L^2(\Omega)}\leq C\tau^{-1}\|(\Delta+k^2-Q)A\|_{L^2(\Omega)}\end{equation}
for some constant $C$ independent of $\tau$. 

Now our attention switches to the phase $\varphi+i\psi$ and vector field $A$ in order to obtain near non-diffracting accelerating solutions in $\Omega$. We consider the phase in terms of $z=x_1+ir$ where $(x_1, r, \theta)$ denotes the cylindrical coordinate for $x=(x_1, x')$ with $x'=(x_2, x_3)=(r,\theta)$, polar coordinates in the $x'$ variable. Set $\varphi+i\psi=l(z)$ where $l$ is a $C^1$ function to be specified later. Then the transport equation \eqref{eqn:CSW_A} reads
\[l'(z)\left(2\frac\partial{\partial\overline z}-\frac 1{z-\overline z}\right)A=0.\]
We can then choose 
\begin{equation}\label{eqn:CSW_A_lambda}
    A(x)=(z-\overline z)^{-\frac12}g(\theta)=\frac {e^{i\lambda z}}{\sqrt{2ir}}g(\theta),
\end{equation}
where $\lambda\geq0$ is a constant and $g(\theta)$ is an arbitrary vector function of $\theta$. 
Then the dominant term of $Z$ as $\tau\gg1$ is given by
    \[e^{\tau(\varphi+i\psi)}A=\frac{e^{\tau l(x_1+ir)+i\lambda(x_1+ir)}}{\sqrt{2ir}}g(\theta)\]
which propagates non-diffractingly and along a circle if we choose $g(\theta)$ appropriately. 
Similarly, one can choose $\varphi+i\psi=l(\overline z)$. Then \eqref{eqn:CSW_A} reads 
$$l'(\overline z)\left(2\frac{\partial}{\partial z}+\frac1{z-\overline z}\right)A=0$$ 
and the solution is $ (2ir)^{-1/2} e^{i\lambda\overline z}g(\theta)$.\\

The choice of $l(\cdot)$ has to be such that $\varphi$ and $\psi$ satisfy respective conditions of LCW and \eqref{eqn:CSW_eikonal}. 
In three dimensional euclidean space, it is mentioned in \cite{FKSU, KS} that there are only six LCWs up to translation and scaling, among which $\varphi(x)=x_1$ and $\varphi(x)=-\log|x|$ only depend on $x_1$ and $r$, hence $z$. 

In \cite{KSU}, $\varphi=-x_1=\Re (-z)$ is used. It is not hard to verify that $\psi=-r$ satisfies the eikonal equation by writing the gradient as 
$$\nabla=\left(\partial_{x_1}, ~\cos\theta\partial_r-\frac{\sin\theta}r\partial_\theta, ~\sin\theta\partial_r+\frac{\cos\theta}r\partial_\theta\right)^t. $$
This is the case $l(z)=-z$. In order to show uniqueness, an approach different from Carleman estimate was taken in \cite{KSU} by taking advantage of that the phase is linear in $x_1$. Suppose $\tau^2+k^2 \notin \mbox{Spec}~(-\Delta_{x'})$. Using the Fourier decomposition of $-\Delta_{x'}$ with imposed zero Dirichlet condition on the transversal domain $\{x'~|~(x_1, x')\in \Omega\}$, this is reduced to solving $-\partial_{x_1}^2$ for complex geometrical optics eigen-modes. Since the phase is linear in $x_1$, this can be achieved by simulating the direct analysis for the linear phase case as in \cite{SU1}, which carries a uniqueness result globally in $x_1$. That is why the domain under consideration in this case is assumed to be cylindrical, for example, $T=\R\times B(0,R)$ where $B(0,R)$ denotes the disc in $\R^2$ centered at $0$ with radius $R$. Then the space in the norm estimate \eqref{eqn:CSW_R_smallness} is replaced by $L^2_\delta(T)$ or $H^s_\delta(T)$ defined using $x_1$-weighted norms
    \begin{equation}\label{eqn:weight_norm}
    \begin{split}
        \|u\|_{L^2_\delta (T)} &= \|\langle x_1 \rangle^\delta u\|_{L^2(T)},\\
         \|u\|_{H^s_\delta (T)} &= \|\langle x_1 \rangle^\delta u\|_{H^s(T)}.
         \end{split}
    \end{equation}
We also define the spaces
    \begin{align*}
        & H^1_{\delta, 0}(T) = \LCB u \in H^1_\delta(T): u|_{\R\times \partial B_R} = 0 \RCB,\\
        & H^1_{-\infty, 0}(T) = \bigcup_{\delta\in \R} H^1_{\delta, 0}(T).
    \end{align*}
Then we have the inverse of the conjugate operator $-\Delta_\varphi:=e^{-\tau\varphi}(-\Delta)e^{\tau\varphi}$
\[G_\tau: L^2_\delta(T)\rightarrow H^2_{-\delta}(T)\cap H^1_{-\delta,0}(T)\]
satisfies
\[\|G_\tau\|_{L^2_{\delta}(T)\rightarrow H^s_{-\delta}(T)}\leq C\tau^{s-1}\]
for $\delta>1/2$ and $0\leq s\leq 2$. However, if $\lambda>0$ in the choice of $A$ given by \eqref{eqn:CSW_A_lambda}, then $(\Delta+k^2-Q)A$, the right hand side of the equation for $R$, is not in $L^2_\delta(T)$ (not enough decay in $x_1$). In \cite{KSU}, it is shown in Proposition 5.1 that $G_\tau$ can be extended to include functions with special dependence in $x_1$ such as $e^{i\lambda x_1}$ on the right hand side, which takes care of this problem. Therefore, there exists a unique $R\in H^2_{-\delta}(T)\cap H^1_{-\delta,0}(T)$. Moreover, it satisfies
\begin{equation}\label{eqn:CSW_R_T_smallness}
    \|R\|_{H^s_{-\delta}(T)}\leq C\tau^{-1}\|(\Delta+k^2-Q)A\|_{L^2_{-\delta}(T)}.
    \end{equation}
In the case that $Q\in L^\infty(T)$ with compact support, we have $R\in H^1_{-\infty,0}(T)$. \\

On the other hand, in \cite{FKSjU, KSjU}, the authors used $\varphi(x)=-\log|x|$ for the case that $\Omega$ does not contain the origin. For our purpose, we consider $\Omega\subset\subset \R^3_+ \cap T$ where $\R^3_+$ denotes the upper half plane corresponding to $\theta\in (0,\pi)$. 
The corresponding $\psi(x)$ can be chosen as the angle formed by the vector $x$ and $x_1$ axis. In terms of $z$, we have
\[\varphi(x)=-\log|z|=-\Re (\log\overline z),\quad \psi(x)=\arctan\frac{\Im z}{\Re z}=-\Im(\log \overline z).\]
Therefore, $\varphi+i\psi=l(\overline z)=-\log \overline z$.

To address the uniqueness of the solution to the Schr\"odinger equation, one can adopt the orthogonal projection technique in \cite{NS} onto a subspace of $L^2(\Omega)$ with specified boundary condition adjusted to suit our case. Then we have $G_\tau=-\Delta_\varphi^{-1}: L^2(\Omega)\rightarrow H^2(\Omega)$ with $\|G_\tau\|_{L^2(\Omega)\rightarrow H^s(\Omega)}\leq C\tau^{s-1}$ where $-\Delta_{\varphi}:=e^{-\tau\varphi}(-\Delta)e^{\tau\varphi}$.

\begin{remark}\label{rmk:linear}
Here we would like to comment also on the case of linear phase $\varphi+i\psi=\zeta_0\cdot x$ in the Cartesian coordinate $(x_1, x_2, x_3)$ with $\zeta_0\in\mathbb C^3$, corresponding to another LCW, $\varphi(x)=\Re\zeta_0$. Then the eikonal equation gives $\Re\zeta_0\cdot \Im \zeta_0=0$ and $|\Re \zeta_0|=|\Im\zeta_0|$, or equivalently $\zeta_0\cdot\zeta_0=0$. The transport equation (\ref{eqn:CSW_A}) becomes $2\zeta_0\cdot \nabla A=0$. Hence $A$ is chosen to be a constant vector. The invertibility of the conjugate operator $-\Delta-2\tau\zeta_0\cdot\nabla$ relies on the direct Fourier analysis, as  in \cite {SU1},  in the whole $\R^3$ for uniqueness. The operator $G_\tau: L^2_\delta(\R^3)\rightarrow H^2_{-\delta}(\R^3)$ satisfies
    \[\|G_\tau\|_{L^2_{\delta}(\R^3)\rightarrow H^s_{-\delta}(\R^3)}\leq C\tau^{s-1}\]
for $\delta>1/2$ and $0\leq s\leq 2$, where $L^2_\delta(\R^3)$ and $H^s_\delta(\R^3)$ are defined by the norms \eqref{eqn:weight_norm} with $\langle x_1\rangle$ replaced by $\langle x\rangle$. 

Since our Schr\"odinger equations here are more of the Helmholtz type with wave number $k$. The phase $\tau\zeta_0$ above can be replaced by $\zeta\in\mathbb C^3$ with $\zeta\cdot\zeta=-k^2$ and $|\zeta|=\tau$. 
\end{remark}

\begin{remark}
The CGO solutions constructed above are the exact solutions of the Schr\"odinger equations in the inhomogeneous space. These solutions will be applied to construct the solutions to Maxwell's equations below and more importantly, they possess near-nondiffracting property as $|\zeta|$ is large. Besides, these solutions have implications for the study of other wave system in nature.  
\end{remark}

\subsection{CGO solutions for Maxwell's equations} 

In this part, we compute the CGO solutions to the original Maxwell's equations using the CGO solutions \eqref{eqn:CSW_Z_Sch} to the reduced Schr\"odinger equation and their uniqueness for the two LCW in cylindrical coordinate, in order to obtain the near non-diffracting accelerating beams.

Now the corresponding CGO solutions to the Dirac system $(P-k+W)Y=0$ are given by 
\begin{align*} 
    Y  &=(P+k-W^t)e^{\tau(\varphi+i\psi)}(A+R)=e^{\tau(\varphi+i\psi)}(B+S) \\
       &=: (y^{(1)}, Y^{(1)}, y^{(2)}, Y^{(2)} )^t,
\end{align*}
where 
\begin{equation}\label{eqn:CSW_B_S}\begin{split}
    B&=\tau P(D(\varphi+i\psi))A+(P+k)A\\
    &=: (b^{(1)}, B^{(1)}, b^{(2)}, B^{(2)} )^t ,\\
    S&=-W^tA+\tau P(D(\varphi+i\psi))R +(P+k-W^t)R\\
    &=: (s^{(1)}, S^{(1)}, s^{(2)}, S^{(2)} )^t .
\end{split}\end{equation}

We recall that for the corresponding 
    \[\vE=\gamma^{-1/2}Y^{(2)},\quad \vH=\mu^{-1/2}Y^{(1)}\]
to be the solution to Maxwell's equations, one must have $y^{(1)}=y^{(2)}=0$. In general, the strategy is to choose a proper 
$$
    A = (a^{(1)}, A^{(1)}, a^{(2)}, A^{(2)} )^t 
$$ 
such that the corresponding scalars of $B$, given by
\begin{equation}\label{eqn:CSW_b_12}
\begin{split}
    b^{(1)}&=\tau D(\varphi+i\psi)\cdot A^{(2)}+D\cdot A^{(2)}+k a^{(1)}\\
    b^{(2)}&=\tau D(\varphi+i\psi)\cdot A^{(1)}+D\cdot A^{(1)}+ka^{(2)}
\end{split}
\end{equation}
satisfy $b^{(1)}=b^{(2)}=0$. 
This is motivated by the fact that $Y$ satisfies the other Schr\"odinger equation 
    \[(P+k-W^t)(P-k+W)Y=(-\Delta -k^2+\wt Q)Y=0,\]
which implies
    \[(-\Delta-k^2+\wt q_j)y^{(j)}=0,\qquad j=1,2,\]
where $\wt q_1$ and $\wt q_2$ are diagonal components of the first and fifth row of $\wt Q$, given by
\[\begin{split}
          \wt q_1&=-(\kappa^2-k^2)-\frac{1}{2}\Delta\beta-\frac{1}{4}D\beta\cdot D\beta,\\
          \wt q_2&=-(\kappa^2-k^2)-\frac{1}{2}\Delta\alpha-\frac{1}{4}D\alpha\cdot D\alpha.
\end{split}\]  
They are compactly supported in $\Omega$.

To obtain the solution to Maxwell's equations, we start with the case on the cylinder $T$ such that $\Omega\subset\subset T$ with $\varphi+i\psi=-z$. Note that $W\in C^1_0(\Omega)$. 
Then we have

\begin{theorem}\label{cgomax}
     Let $k\geq 0$, $\lambda>0$ be constant and let $\delta > 1/2$. There exists $\tau_0\geq 1$ such that when
       $$
            |\tau|\geq \tau_0\ \hbox{and}\ \ \tau^2+k^2 \notin \mbox{Spec}~(-\Delta_{x'}) 
       $$
       we have that given smooth functions $\chi_1(\theta)$ and $\chi_2(\theta)$, there exists a solution to Maxwell's equations \eqref{r:max} in $T$ of the form
       \begin{equation}\label{cylinerMax}
       \begin{split}
     \vE(x_1,r,\theta) &= \tau\gamma^{-1/2} {{{e^{(-\tau+i\lambda)(x_1+ir)}}\over \sqrt{2ir}}}   \chi_1(\theta)
    \left(
    \begin{array}{c}
        -i\\
        \cos\theta\\
        \sin\theta\\
    \end{array} 
    \right) + O_{\tau}(1),\\
     \vH(x_1,r,\theta) &= \tau \mu^{-1/2} {{e^{(-\tau+i\lambda)(x_1+ir)}}\over \sqrt{2ir}} \chi_2(\theta)
        \left(
        \begin{array}{c}
            -i\\
            \cos\theta\\
            \sin\theta\\
        \end{array} 
        \right) + O_{\tau}(1),
\end{split}\end{equation}
where $O_\tau(1)$ stands for a function whose $L_{-\delta}^2(T)$, hence $L^2(\Omega)$ norm is of order $O(1)$ as $\tau\rightarrow\infty$. 
\end{theorem}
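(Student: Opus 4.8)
The plan is to obtain $\vE,\vH$ as the electromagnetic blocks of a CGO solution $Y$ of the Dirac system $(P-k+W)Y=0$, built by applying $P+k-W^t$ to a CGO solution $Z=e^{\tau(\varphi+i\psi)}(A+R)$ of the matrix Schr\"odinger equation with the linear LCW $\varphi+i\psi=-z$, and then by forcing the two scalar potential components $y^{(1)},y^{(2)}$ of $Y$ to vanish, which is exactly the condition under which $Y$ restricts to a genuine solution of \eqref{r:max}. Throughout I write $f:=e^{i\lambda z}/\sqrt{2ir}$ and $\xi:=D(\varphi+i\psi)$.

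First I would fix the geometry of the phase. Since $z=x_1+ir$, the cylindrical gradient gives $\nabla(-z)=(-1,-i\cos\theta,-i\sin\theta)^t$, hence $\xi=(i,-\cos\theta,-\sin\theta)^t$ is null, $\xi\cdot\xi=0$, and the polarization vector $(-i,\cos\theta,\sin\theta)^t$ in \eqref{cylinerMax} is precisely $-\xi$. I would then take the transport solution $A=f\,g(\theta)$ of \eqref{eqn:CSW_A} with scalar parts $a^{(1)}=-\chi_1 f$, $a^{(2)}=-\chi_2 f$ and vector parts
\[
A^{(1)}=\frac{k\chi_2}{i\tau+\lambda}\,f\,(1,0,0)^t,\qquad A^{(2)}=\frac{k\chi_1}{i\tau+\lambda}\,f\,(1,0,0)^t .
\]
Using $\partial_{x_1}f=i\lambda f$ and $(1,0,0)\cdot\xi=i$, a direct substitution into \eqref{eqn:CSW_b_12} gives $b^{(1)}=(i\tau+\lambda)A^{(2)}_1+k a^{(1)}=0$ and likewise $b^{(2)}=0$, \emph{identically} in $\tau$. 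The leading fields then come from the $\xi$-blocks of $\tau P(\xi)A$: the $a^{(j)}\xi$ terms produce $\tau\,\chi_1 f(-\xi)$ and $\tau\,\chi_2 f(-\xi)$, while the cross terms $\tau\,\xi\times A^{(j)}=O(1)$ because $A^{(1)},A^{(2)}=O(\tau^{-1})$; these, together with the lower-order block $(P+k)A$, are absorbed into the error.

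Next I would produce the remainder. The choice of $A$ as a transport solution annihilates the $O(\tau)$ part of $\mathcal L_{\varphi+i\psi}A$, leaving a source $(\Delta+k^2-Q)A$ of size $O(1)$ in the relevant weighted space. The extended inverse $G_\tau$ of \cite{KSU} — available under $\tau^2+k^2\notin\mbox{Spec}(-\Delta_{x'})$, which is what lets the source carry the non-decaying factor $e^{i\lambda x_1}$ — then yields a unique $R\in H^1_{-\infty,0}(T)$ with $\|R\|_{L^2}=O(\tau^{-1})$ and $\|\nabla R\|_{L^2}=O(1)$ by \eqref{eqn:CSW_R_T_smallness}. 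Writing $Y=(P+k-W^t)e^{\tau(\varphi+i\psi)}(A+R)=e^{\tau(\varphi+i\psi)}(B+S)$, the principal symbol term $\tau P(\xi)R=O(1)$, and since $W\in C^1_0(\Omega)$ the whole of $S$ and of the non-principal part $(P+k)A$ of $B$ is $O(1)$ in $L^2_{-\delta}(T)$; multiplying by $\gamma^{-1/2}$ and $\mu^{-1/2}$ produces the stated $O_\tau(1)$ error.

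The crux, and the step I expect to be the main obstacle, is to show $y^{(1)}=y^{(2)}=0$. Because $Y$ also solves $(P+k-W^t)(P-k+W)Y=(-\Delta-k^2+\wt Q)Y=0$ and the first and fifth rows of $\wt Q$ are diagonal, each $y^{(j)}=e^{\tau(\varphi+i\psi)}w^{(j)}$ satisfies the homogeneous scalar equation $(-\Delta-k^2+\wt q_j)y^{(j)}=0$. The sole purpose of having engineered $b^{(1)}=b^{(2)}=0$ is that the conjugated unknown is then $w^{(j)}=s^{(j)}$, which carries \emph{no} non-decaying $e^{i\lambda x_1}$ amplitude and so lies in the uniqueness class $H^1_{-\infty,0}(T)$. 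On that class the conjugate operator inverted by $G_\tau$ is injective precisely when $\tau^2+k^2\notin\mbox{Spec}(-\Delta_{x'})$, so the homogeneous equation forces $w^{(j)}=0$, i.e. $\Phi=\Psi=0$ and $Y$ is a true Maxwell field. This is the delicate point: the general Carleman-estimate construction of \cite{FKSjU, KSjU} does not by itself yield the vanishing of the scalar potentials, and it is only the linear-in-$x_1$ structure of the phase $-z$ together with the spectral condition that supplies the global-in-$x_1$ uniqueness needed here.
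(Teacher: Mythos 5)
Your proposal is correct and follows essentially the same route as the paper: build $Z=e^{-\tau z}(A+R)$ with the transport amplitude $A=e^{i\lambda z}(2ir)^{-1/2}g(\theta)$, apply $P+k-W^t$, engineer $b^{(1)}=b^{(2)}=0$, and use the global-in-$x_1$ uniqueness under the spectral condition to kill $s^{(1)},s^{(2)}$ and hence the scalar potentials. The only (immaterial) difference is your choice $G^{(j)}=\frac{k\chi_{j'}}{i\tau+\lambda}(1,0,0)^t$ versus the paper's $G^{(j)}=\frac{k}{i\tau}(\chi_{j'},\sin\theta,-\cos\theta)^t$; both are $O(\tau^{-1})$, both make $b^{(1)}=b^{(2)}=0$ identically, and both leave the same leading term $\tau a^{(j)}D(\varphi+i\psi)$.
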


\begin{proof}     
Since $\varphi+i\psi=-z$, we have $D(\varphi+i\psi)=(i, -\cos\theta, -\sin\theta)^t$. Suppose $A$ is given by \eqref{eqn:CSW_A_lambda}.
Denote $g(\theta):=(g^{(1)}, {G^{(1)}}^t, g^{(2)}, {G^{(2)}}^t)^t$ the same convention of other eight-vectors. 
Then 
\[
\begin{split}
b^{(1)}=&\frac{e^{i\lambda z}}{\sqrt{2ir}}\left[\left(\begin{array}{c}i\tau+\lambda \\(-\tau+i\lambda+\frac i{2r})\cos\theta+\frac i r\sin\theta\partial_\theta \\(-\tau+i\lambda+\frac i{2r})\sin\theta-\frac i r\cos\theta\partial_\theta\end{array}\right)\cdot G^{(2)}(\theta)+kg^{(1)}(\theta)\right],\\
b^{(2)}=&\frac{e^{i\lambda z}}{\sqrt{2ir}}\left[\left(\begin{array}{c}i\tau+\lambda \\(-\tau+i\lambda+\frac i{2r})\cos\theta+\frac i r\sin\theta\partial_\theta \\(-\tau+i\lambda+\frac i{2r})\sin\theta-\frac i r\cos\theta\partial_\theta\end{array}\right)\cdot G^{(1)}(\theta)+kg^{(2)}(\theta)\right].
\end{split}
\]
Given $\chi_1(\theta), \chi_2(\theta)\in C^\infty(\R)$ we choose
 \[\begin{split}
g^{(1)}& = -{k\over i\tau}{i\tau+\lambda \over k}\chi_1(\theta),\\
G^{(1)}& = {k\over i\tau}(\chi_2(\theta),\sin\theta,-\cos\theta)^t,\\
g^{(2)}& =-{k\over i\tau}{i\tau+\lambda \over k}\chi_2(\theta),\\
G^{(2)}& = {k\over i\tau} (\chi_1(\theta),\sin\theta,-\cos\theta)^t.
\end{split}\]
By direct computation, we obtain $b^{(1)}=b^{(2)}=0$. From \eqref{eqn:CSW_R_T_smallness} and \eqref{eqn:CSW_B_S}, we can obtain $s^{(1)}, s^{(2)}\in H^1_{-\delta,0}(T)$. Then $e^{i\tau r}s^{(j)}$ is the only solution to 
$$
    e^{\tau x_1}( -\Delta - k^2 +\tilde{q}_j)e^{-\tau x_1}(e^{-i\tau r} s^{(j)}) = 0\ \hbox{in $T$}.
$$
By uniqueness result (\ref{eqn:CSW_R_T_smallness}), we obtain $s^{(1)}=s^{(2)}=0$. Hence $y^{(1)} = y^{(2)}= 0.$

To obtain $(\vE, \vH)$, observe that $\|S^{(j)}\|_{L^2_{-\delta}(T)}$ ($j=1,2$) is bounded in $\tau$ and 
\[\begin{split}
B^{(1)}&=\tau a^{(2)}D(\varphi+i\psi)+\tau D(\varphi+i\psi)\times A^{(2)}+D a^{(2)}+D\times A^{(2)}+kA^{(1)},\\
B^{(2)}&=\tau a^{(1)}D(\varphi+i\psi) - \tau D(\varphi+i\psi)\times A^{(1)}+D a^{(1)} - D\times A^{(1)}+kA^{(2)}.
\end{split}\]
By the $\tau$-dependence of above choice of $g(\theta)$, it is easy to see that the leading term in $L^2$ norm of $B^{(1)}$ and $B^{(2)}$ are out of the first term 
\[\begin{split}\tau  a^{(2)}D(\varphi+i\psi)&=-\tau\frac{e^{i\lambda z}}{\sqrt{2ir}}\chi_2(\theta)D(\varphi+i\psi)+O_\tau(1),\\
\tau  a^{(1)}D(\varphi+i\psi)&=-\tau\frac{e^{i\lambda z}}{\sqrt{2ir}}\chi_1(\theta)D(\varphi+i\psi)+O_\tau(1),
\end{split}\] 
respectively. This finishes the proof of \eqref{cylinerMax}. 
\end{proof}

For the other case of nonlinear phase $\varphi+i\psi=-\log\overline z$, although we are able to construct the CGO solution to the reduced Schr\"odinger equation of the form (\ref{eqn:CSW_Z_Sch}) with uniqueness, the calculation suggests that we are unable to pick a proper $A$ such that the $b^{(1)}=b^{(2)}=0$.

However, the construction here is on a bounded domain $\Omega$ and the assumption that the parameters $\mu$ and $\varepsilon$ are asymptotic to constants $\mu_0, \varepsilon_0$ is artificial, namely, we could extend the parameters to a larger domain $\wt\Omega\supset\supset\Omega$ such that $\mu, \varepsilon\in C^2_0(\wt\Omega)$ and construct the solution there. As a result, $k=0$ in our calculation and the corresponding $b^{(1)}=b^{(2)}=0$ if we choose $G^{(1)}=G^{(2)}=0$, by \eqref{eqn:CSW_b_12}.
Meanwhile, $S$ is still bounded in $H^1_0(\wt \Omega)$ with respect to $\tau$. Again, by uniqueness, we have $s^{(1)}=s^{(2)}=0$. 
Finally, for arbitrary $C^1$ functions $\chi_1(\theta)$ and $\chi_2(\theta)$, we can have
\[\begin{split}
B^{(1)}&=\tau \frac{e^{i\lambda \overline z}}{\overline z\sqrt{2ir}}\chi_2(\theta)(i,\cos\theta,\sin\theta)^t+O_\tau(1),\\
B^{(2)}&=\tau\frac{e^{i\lambda \overline z}}{\overline z\sqrt{2ir}}\chi_1(\theta)(i,\cos\theta,\sin\theta)^t+O_\tau(1),
\end{split}\]
by letting  $g^{(1)}(\theta)=\chi_1(\theta)$ and $g^{(2)}(\theta)=\chi_2(\theta)$. Together, this implies that we obtain the CGO solutions to Maxwell's equations on $\wt\Omega$ given by
\begin{align}\label{s_EH}
\vE(x_1,r,\theta)&=\tau\gamma^{-1/2} \frac{e^{i\lambda(x_1-ir)}}{(x_1-ir)^{\tau+1}\sqrt{2ir}}\chi_1(\theta)\left(\begin{array}{c}i \\\cos\theta \\\sin\theta\end{array}\right)+O_\tau(1),\notag \\
\vH(x_1,r,\theta)&=\tau\mu^{-1/2} \frac{e^{i\lambda(x_1-ir)}}{(x_1-ir)^{\tau+1}\sqrt{2ir}}\chi_2(\theta)\left(\begin{array}{c}i \\\cos\theta \\\sin\theta\end{array}\right)+O_\tau(1).
\end{align}
Note that these solutions are not oscillating with respect to $r$.

\subsection{Accelerating nondiffracting beams}
We conclude this section with more remarks discussing on the properties of the solutions we constructed above. These beams exhibit shape-preserving, nondiffracting acceleration.

In (\ref{cylinerMax}), if we choose $\chi_1(\theta)=\chi_2(\theta)=-e^{i\rho\theta}$ for $\rho>0$, then for $\tau$ large, the electromagnetic wave packets behave as
\begin{equation}\label{eqn:Ndiffra-acce}
\begin{split}
\vE(x,t) & \sim \tau\gamma^{-1/2}  \frac{e^{(-\tau+i\lambda)(x_1+ir)}}{\sqrt{2ir}}\left(\begin{array}{c} -i \\\cos\theta \\\sin\theta\end{array}\right)e^{i\rho\theta-i\omega t},\\
\vH(x,t) & \sim \tau\mu^{-1/2}  \frac{e^{(-\tau+i\lambda)(x_1+ir)}}{\sqrt{2ir}}\left(\begin{array}{c} -i \\\cos\theta \\\sin\theta\end{array}\right)e^{i\rho\theta-i\omega t}.
\end{split}
\end{equation}
To explain the non-diffracting property, when looking at the transverse profile of $\vE$ and $\vH$ (without considering $e^{i\rho\theta-i\omega t}$), we observe that both real part and imaginary part have {\bf intensity} independent of $\theta$ if $\gamma$ and $\mu$ are independent of $\theta$. 

On the other hand, we observe that the first component $\textbf{E}_1$ electric wave is shape-preserving as shown in Figure \ref{Kelvinfig} a) while the second and the third components  are not shape-preserving on their own, as shown in Figure \ref{Kelvinfig} b) and c). It suggests a power shift from $\textbf{E}_2$ component to $\textbf{E}_3$ component, which was also observed in \cite{KBNS} and explained as the rotation of the fields to stay normal to the beam bending trajectory.

For the spherical wave (\ref{s_EH}), we observe that there is no oscillation with respect to the radius $r$ as shown in Figure \ref{plt_sphere_E1}. However, the intensity profile still preserves along every circle whenever $x_1$ is fixed.

\begin{figure}[htbp] 
      \centering
       \begin{minipage}[b]{.3\textwidth} 
           (a)\includegraphics[width=2 in]{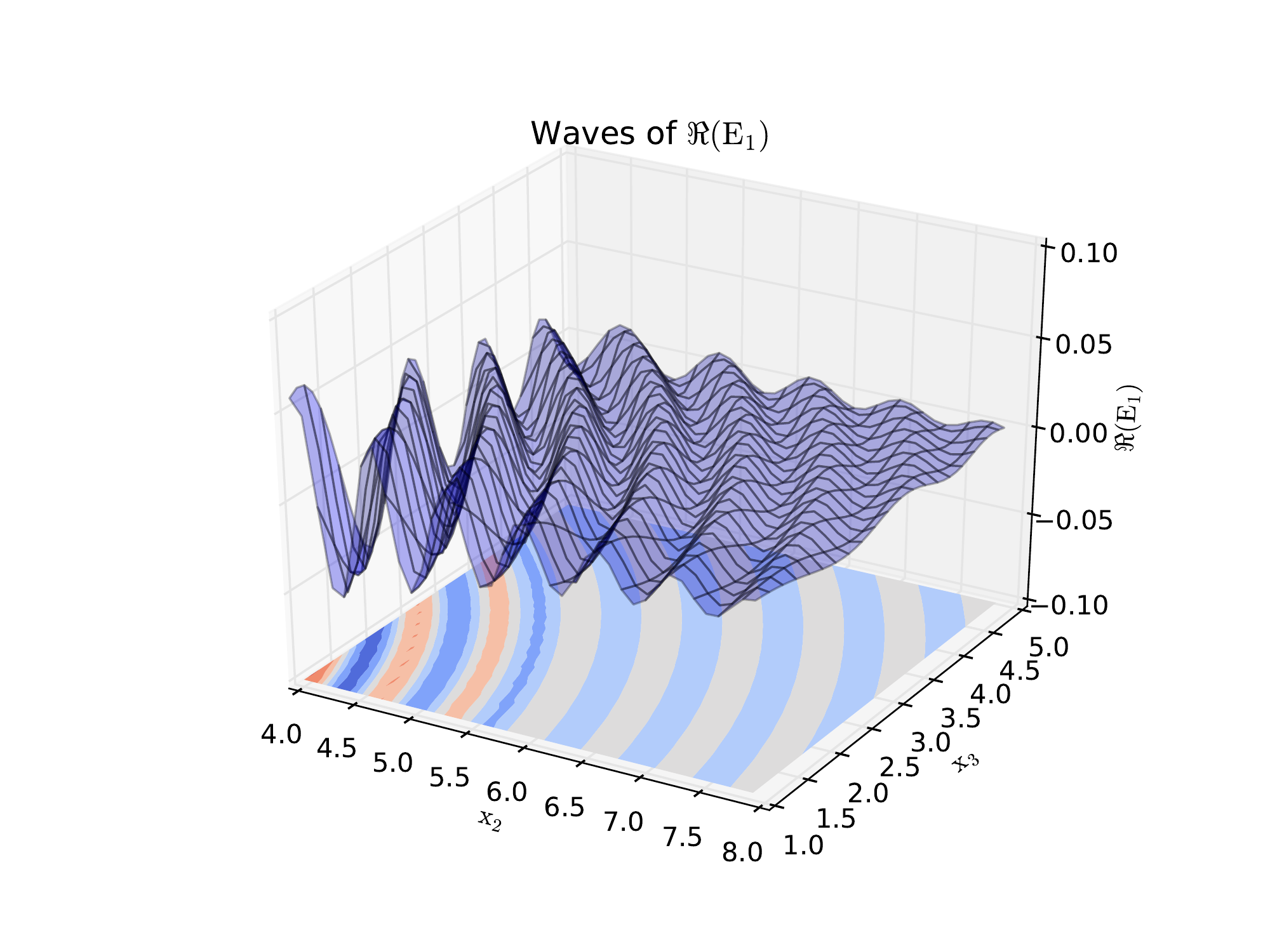} 
 
             \end{minipage}  
        \hspace{1cm}
       \begin{minipage}[b]{.3\textwidth} 
          (b)\includegraphics[width=2 in]{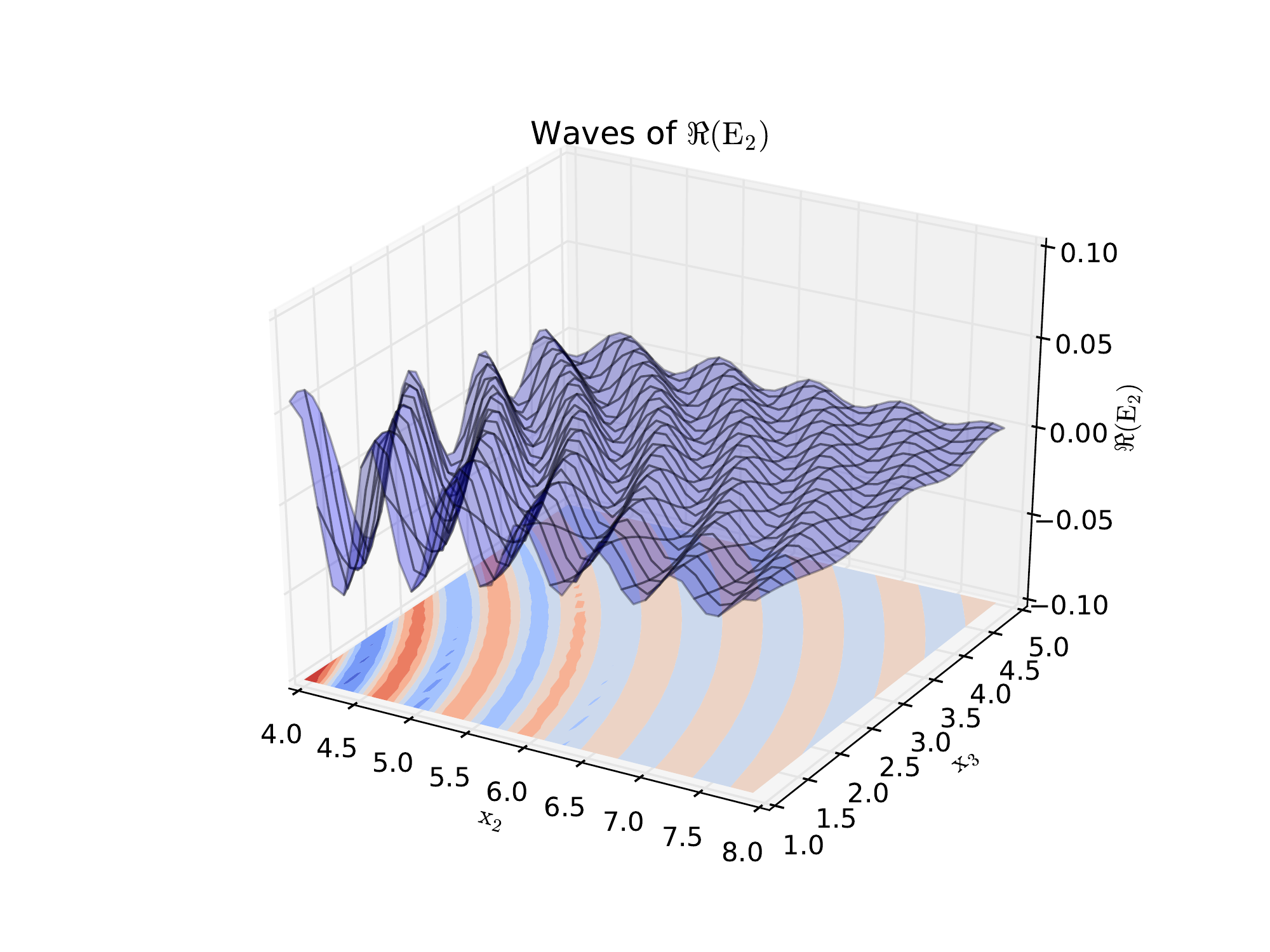} 
 
       \end{minipage} 
      \vspace{0.8cm}
      
      \begin{minipage}[b]{.3\textwidth} 
          (c)\includegraphics[width=2 in]{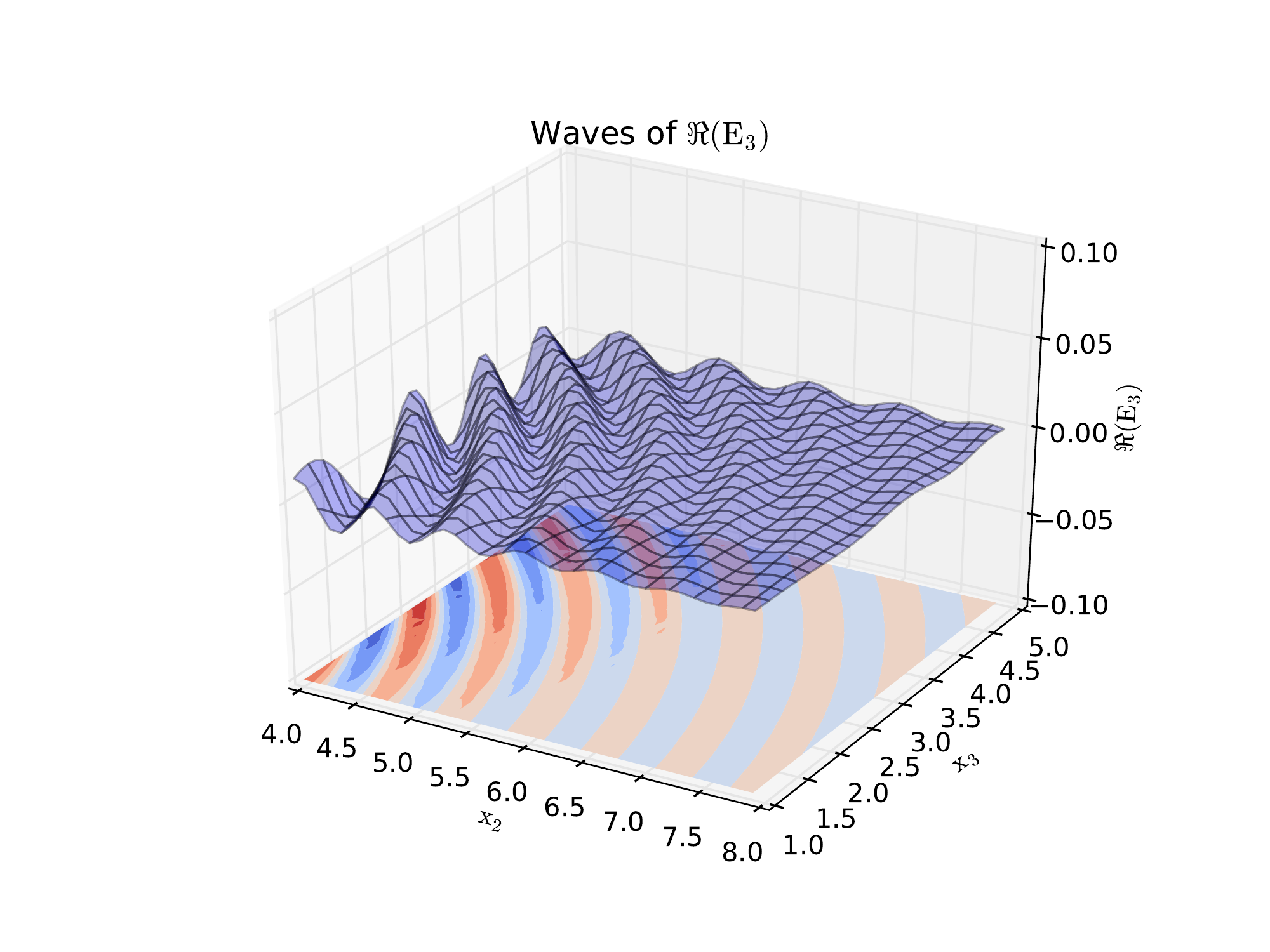} 
 
      \end{minipage} 
        \hspace{1cm}
      \begin{minipage}[b]{.3\textwidth} 
          (d)\includegraphics[width=2 in]{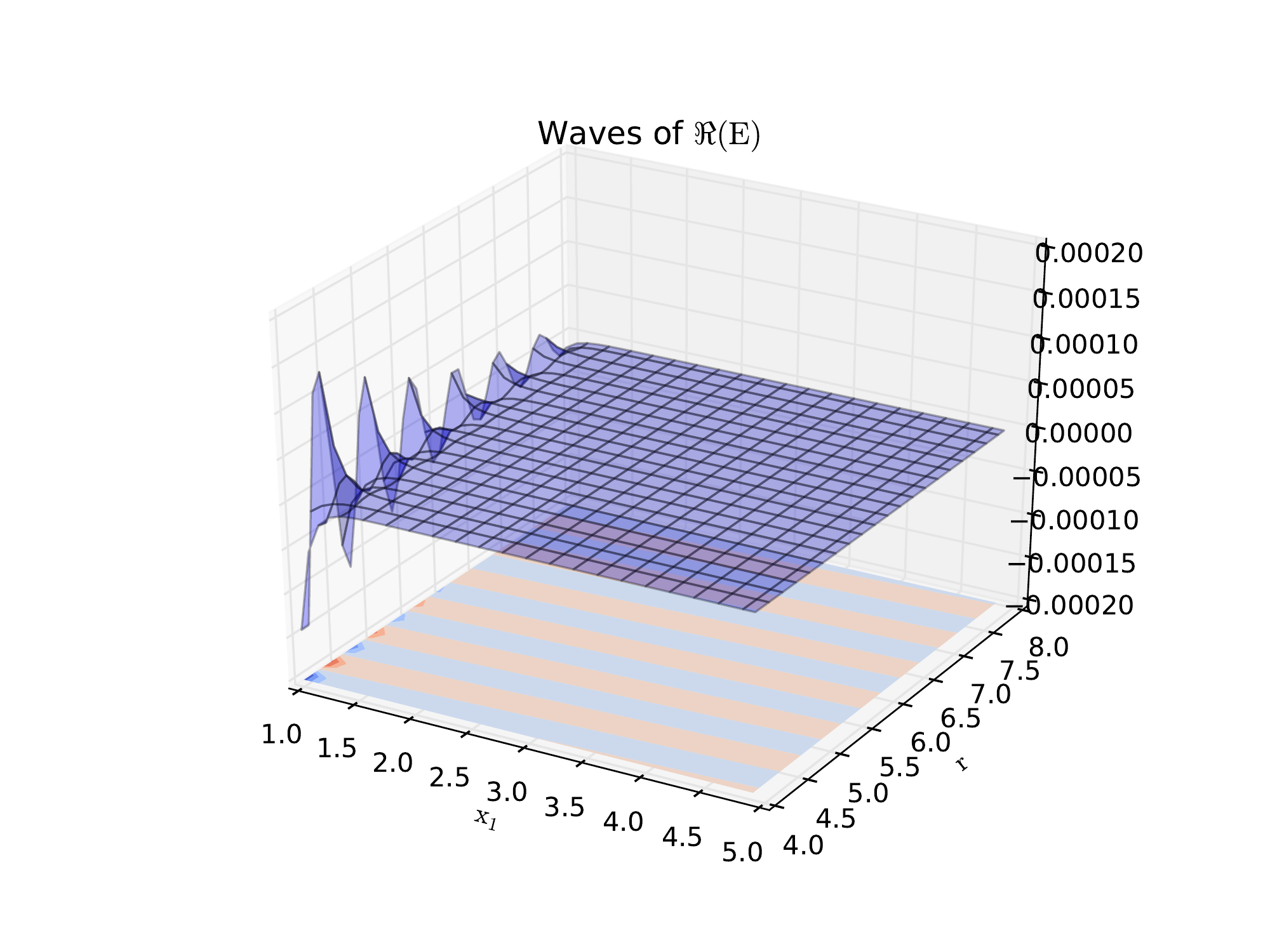} 

      \end{minipage}                     
                     
    \caption{Accelerating wave packets when $\tau = 10$ and $\lambda=0.5$ (a) The beam $\textbf{E}_1$ exhibiting shape-preserving. The trajectory is confined in a circle on the plane $\{x_1=0\}$. (b) and (c), the figures show that the propagating beams $\textbf{E}_2$ and $\textbf{E}_3$ are along a circular trajectory with varying intensity. (d) The intensity of $\textbf{E}$ decreases and oscillates when $r$ is growing. }\label{Kelvinfig} 
\end{figure}

\begin{figure}[h]
    \begin{minipage}[b]{.3\textwidth}
         \includegraphics[width=2in]{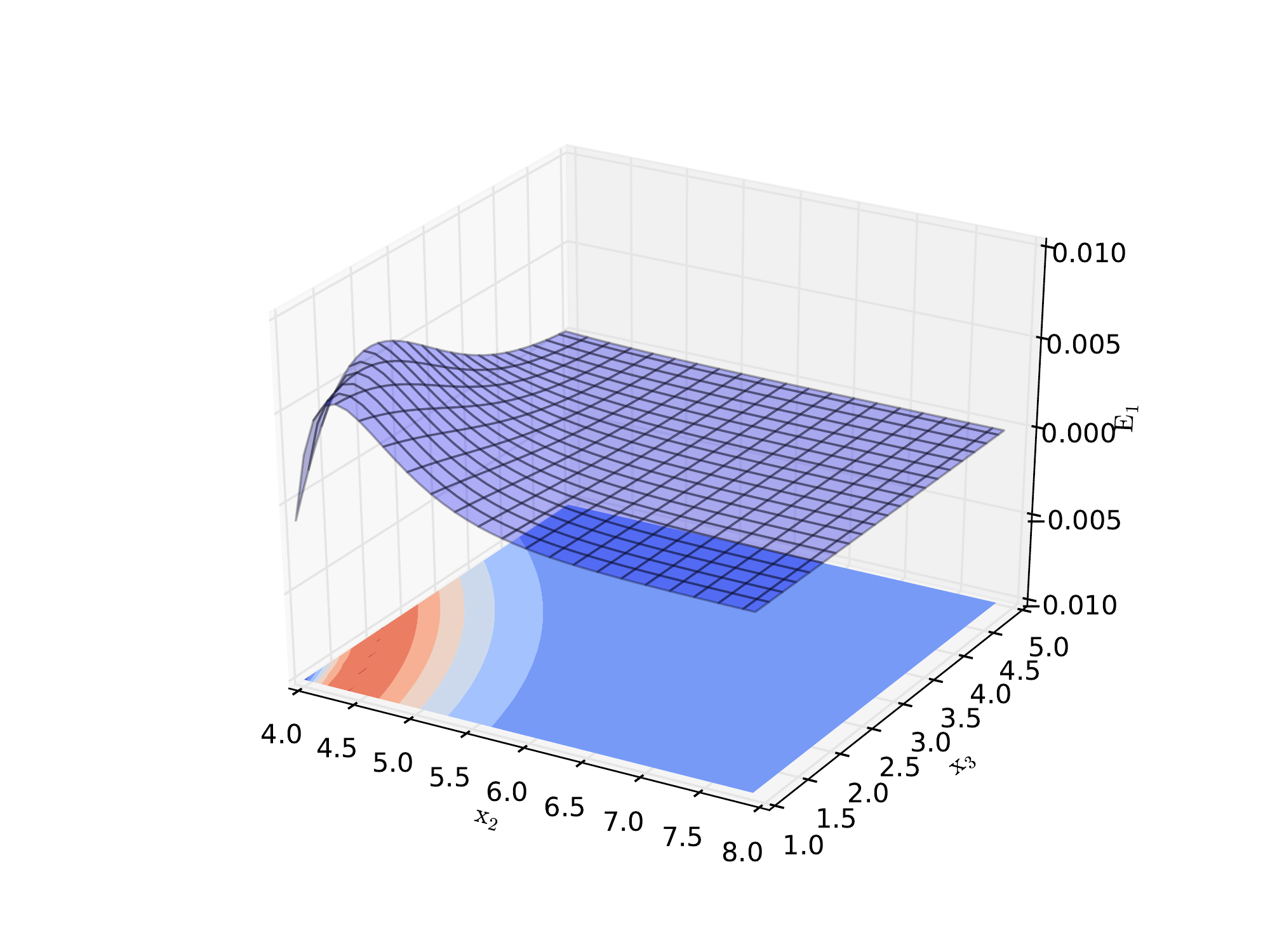}               
    \end{minipage}
       \caption{Intensity profiles of the spherical accelerating waves $\textbf{E}_1$ in (\ref{s_EH}) when $\tau=9$ and $\lambda =0.5$ on the plane $\{x_1=2\}$.} \label{plt_sphere_E1}
\end{figure}



\begin{remark}
The reduction of Maxwell's equations to the vectorial Schr\"odinger equation was inspired (see \cite{OS}) by the physically referred auxiliary Hertz potentials (also known as Sommerfeld potentials) introduced to handle equations in vacuum (homogeneous) background. That is to write the electric and magnetic fields as
\[\begin{split}
\vE  & =-\nabla\times\Pi_m-\frac{1}{i\omega\varepsilon_0}(\nabla\nabla\cdot+k^2)\Pi_e,\\
\vH & =\nabla\times\Pi_e-\frac{1}{i\omega\mu_0}(\nabla\nabla\cdot+k^2)\Pi_m,
\end{split}\]
where the Hertz vector potential $\Pi_{m,e}$ satisfies the vector Schr\"odinger equation 
\[(-\Delta-k^2)\Pi_{m,e}=0.\]
Note that these potentials are given by $Z$ through 
\[Z=\left(\begin{array}{c}-\frac i{\omega\varepsilon_0^{1/2}}D\cdot\Pi_e\\ i\varepsilon_0^{1/2}\Pi_m\\ -\frac i{\omega\mu_0^{1/2}}D\cdot\Pi_m \\i\mu_0^{1/2}\Pi_e\end{array}\right).\]

When $\Pi_m=\psi \hat e_1$ where $\psi(r,\theta)$ is a scalar function independent of $x_1$, and $\Pi_e=0$, we have
\[
     \vE = \LC -\hat{r}{\partial_\theta \over r} + \hat{\theta}\partial_r \RC \psi, 
\]
where $\hat r=(0,\cos\theta,\sin\theta)^t$ and $\hat \theta=(0,-\sin\theta,\cos\theta)^t$ are the corresponding radial and angular unit vectors. We recover the TM-polarized propagation mode. 

%
It was analyzed in \cite{BAKS} that, for their nondiffracting accelerating beam, the radial $\hat{r}$ component plays the dominant role and preserves the shape of propagation. We observe that there is a similar behavior for our solution, the radial component of \eqref{eqn:Ndiffra-acce} also dominates the propagation direction of the waves. 

\end{remark}

\begin{remark}
We would like to point out that the CGO solutions are constructed on a bounded domain with boundary conditions, although we do not restrict ourselves to a specific domain. In another word, this is not a freely propagating solution (a half-space solution) sent initially from a plane like $x_3=0$ physically as in \cite{AMMK,AB,KBNS,SC1}. 
\end{remark}

\section{Kelvin transform based accelerating beams}\label{section2d}
In this section, we apply Kelvin transform to construct the solutions to Maxwell's equations in the physical space, that is, the parameter are constants, which is the situation discussed in most physical papers. We show these beams also have shape-preserving property.

\subsection{Kelvin transform and transformation law for Maxwell's equations}

First, we recall the transformation law for Maxwell's equations. Suppose $\tilde x=F(x)$ is a diffeomorphism on $\overline{\R^3}$, where $\overline{\R^3}:=\R^3\cup \{\infty\}$ and let $(\textbf{E}, \textbf{H})$ be the solution to Maxwell's equations
\begin{equation}
\label{eqn:Max_const}
\nabla\times \vE-i\omega\mu_0 \vH = 0,\quad \nabla\times \vH+i\omega\varepsilon_0 \vE=0.  
\end{equation}
If we define the {\em push-forward} of the electric and magnetic fields $\textbf{E}$ and $\textbf{H}$ by $F$ as 
\[
\begin{split}\wt \vE(\tilde x) & =F_*\vE:=DF^{-1}(\tilde x)\vE\circ F^{-1}(\tilde x),\\
 \wt \vH(\tilde x) & =F_*\vH:=DF^{-1}(\tilde x)\vH\circ F^{-1}(\tilde x),
 \end{split}
 \]
where $DF$ denotes the Jacobian matrix of the transformation,
then these push-forward fields $\wt\vE$ and $\wt\vH$ satisfy Maxwell's equations in the space of $\tilde x$
\begin{equation}
\label{eqn:Max_pf}
\nabla\times \wt\vE-i\omega\tilde\mu \wt\vH = 0,\quad \nabla\times \wt\vH+i\omega\tilde\varepsilon\wt\vE=0
\end{equation}
with {\em push-forward} parameters
\[
\begin{split}
\tilde\mu(\tilde x) & =F_*\mu_0:=\frac{DF~\mu_0~DF^t}{|\det(DF)|}\circ F^{-1}(\tilde x),\\
\tilde\varepsilon(\tilde x) & =F_*\varepsilon_0:=\frac{DF~\varepsilon_0~DF^t}{|\det(DF)|}\circ F^{-1}(\tilde x).
\end{split}
\]
This invariance is just another presentation of independence of choice of coordinates for Maxwell's equations. 

In order to construct accelerating beams in the free space with constant $\mu_0$ and $\varepsilon_0$, which we call the {\em physical space}, we consider the push-forward system living in the space with $\tilde\mu$ and $\tilde\varepsilon$, which we call the {\em virtual space}, by a special $F$, known as the Kelvin transform. It is the following reflection map with respect to the sphere $\partial B(0,R)$ of radius $R>0$,
\[\tilde x=K(x):=\frac{R^2}{|x|^2}(x_1,x_2,x_3),\]
where $|x|^2=\sum_{j=1}^3x_j^2$. 
It maps any sphere through origin to a hyperplane and satisfies $K^{-1}=K$.  A two dimensional configuration is shown in Figure \ref{fig:K2}. 
\begin{figure}[htbp] 
   \centering
   \includegraphics[width=3 in]{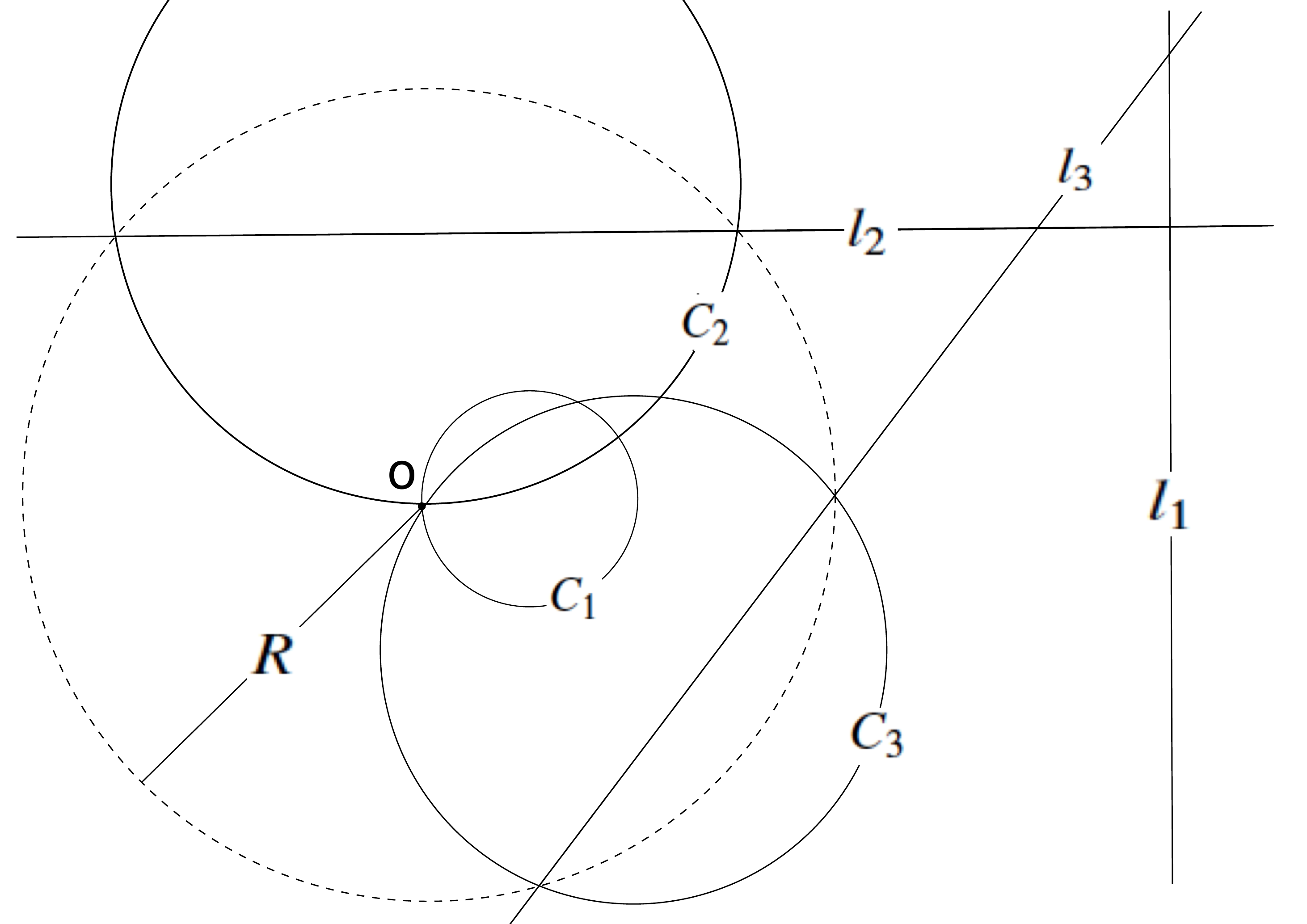} 
   \caption{\small The reflection Kelvin transform $K$ in a plane: The image of the circle (sphere) $C_j$ ($j=1,2,3$) through the origin is the straight line (hyperplane) $l_j$ and vice versa since $K=K^{-1}$.}
   \label{fig:K2}
\end{figure}
Correspondingly, away from the origin, we have
\begin{equation}\label{eqn:DK} 
    DK(x)=\frac{R^2}{|x|^2}(I-2\widehat r\,\widehat r^t),
\end{equation}
where $\widehat r:=x/|x|$ is the radial unit vector and $I$ denotes the identity matrix.
Therefore,  $|\det(DK)|=R^6/|x|^6$ and 
\[\frac{(DK)~(DK)^t}{|\det(DK)|}=\frac{|x|^2}{R^2}=\frac{R^2}{|\tilde x|^2}\]
which gives 
\begin{equation}\label{eqn:pf_mu_eps}
    \tilde\mu(\tilde x)=\frac{R^2}{|\tilde x|^2}\mu_0,\quad \tilde\varepsilon(\tilde x)=\frac{R^2}{|\tilde x|^2}\varepsilon_0.
\end{equation}



\subsection{CGO solutions in the virtual space}
 
We aim to use the CGO solution to Maxwell's equations \eqref{eqn:Max_pf} in an annulus $\mathcal A:=\left\{\tilde x~|~\frac{R^2}{L}<|\tilde x|<L\right\}$ for some $L>4R$ with $(\tilde\mu,\tilde\varepsilon)$ given by \eqref{eqn:pf_mu_eps}.  Note that $K^{-1}(\mathcal A)=\mathcal A$. 

Let $\phi$ be a $C^\infty(\R)$ cut-off function with $\phi\equiv 1$ on $\left(\frac{R^2}{L},L\right)$ and $\phi\equiv 0$ on $\R\backslash \left[\frac{R^2}{2L}, 2L\right]$. Set 
\[\mu(\tilde x)=\phi(|\tilde x|)\tilde\mu(\tilde x),\quad \varepsilon(\tilde x)=\phi(|\tilde x|)\tilde\varepsilon(\tilde x).\]
Then we want to construct CGO solutions to Maxwell's equations with parameters $(\mu, \varepsilon)$ in $\Omega:=B(0,2L)$. Note that now $\mu,\varepsilon\in C^\infty_0(\Omega)$. 
Following the steps in Section \ref{section3D}, one looks for the CGO solution to the Schr\"odinger equation
\[(P+W)(P-W^t)Z=(-\Delta+Q)Z=0.\]

By Remark \ref{rmk:linear}, we have for $\zeta\in\mathbb C^3$ satisfying $\zeta\cdot\zeta=0$ and $|\zeta|\sim\tau$ sufficiently large, 
\begin{equation}\label{eqn:CGO_sch_linear}
Z=e^{\zeta\cdot\tilde x}\left(A_\zeta+R_\zeta(\tilde x)\right)
\end{equation}
for some constant vector $A_\zeta$ of order $O_{\tau\rightarrow\infty}(1)$, where $R_\zeta\in H^2_{-\delta}$.
Moreover, 
\[\|R_\zeta\|_{H^s_{-\delta}}\leq C|\zeta|^{s-1},\qquad \delta>\frac{1}{2}.\]

For our purpose, let $\rho>0$ be the spatial propagation frequency in $\tilde x_3$ direction. We choose 
\[\zeta=\frac{1}{2}\left(\left(\begin{array}{c}-\tau \\ \tau \\0\end{array}\right)+i\left(\begin{array}{c}\sqrt{\tau^2-\rho^2} \\ \sqrt{\tau^2-\rho^2} \\\sqrt 2\rho\end{array}\right)\right)\]
whose norm is $|\zeta|^2=\tau^2$. 

Naturally, given $Z$ as \eqref{eqn:CGO_sch_linear}, we have
\[Y  = (P-W^t)e^{\zeta\cdot\tilde x}(A_\zeta+R_\zeta(\tilde x)) = e^{\zeta\cdot \tilde x}\left(B_\zeta+S_\zeta(\tilde x)\right),\]
where
\begin{align*}
    & B_\zeta  = P(-i\zeta)A_\zeta,\quad\\
    & S_\zeta = -W^t(\tilde x)A_\zeta+(P+P(-i\zeta))R_\zeta(\tilde x)-W^t(\tilde x)R_\zeta(\tilde x).
\end{align*}
Here $P(-i\zeta)$ denotes the matrix of the form \eqref{eqn:PofD} but with $D$ replaced by $-i\zeta$. 
If we choose 
\[A_\zeta=\frac 1\tau\left(\zeta\cdot \mathbf a, \vec0, \zeta\cdot\mathbf b, \vec0\right)^t\]
for arbitrary $\mathbf a, \mathbf b\in\R^3$, it immediately gives
\[\begin{split}
B_\zeta & =\left(0,\frac{ -i\zeta\cdot \mathbf b}\tau\zeta^t, 0, \frac{ -i\zeta\cdot\mathbf a}\tau\zeta^t\right)^t\\
&=-i\tau\left(0,~(\widehat\zeta_0\cdot\mathbf b){\widehat\zeta_0}^t,~0,~(\widehat\zeta_0\cdot\mathbf a){\widehat\zeta_0}^t\right)^t+O_{\tau\rightarrow\infty}(1),
\end{split}\]
where 
\[\widehat\zeta_0=\lim_{\tau\rightarrow\infty}\zeta/\tau=\frac{1}{2}\left(\left(\begin{array}{c}-1 \\1 \\0\end{array}\right)+i\left(\begin{array}{c}1 \\ 1 \\0\end{array}\right)\right).\]
Since the first and the fifth components of $B_\zeta$ vanish, applying the uniqueness addressed in Remark \ref{rmk:linear}, we obtain that there exists a solution to Maxwell's equations in $\Omega$ given by
\[
\begin{split}
    \wt\vE(\tilde x) 
    & = -i\tau\varepsilon^{-1/2}(\tilde{x})\tilde{\textbf{e}}(\tilde{x}),\\
    \wt\vH(\tilde x) 
    & = -i\tau\mu^{-1/2}(\tilde{x}) \tilde{\textbf{h}}(\tilde{x}).
\end{split}
\]
Here we denote
\[\begin{split}
    \wt\ve(\tilde x) & = e^{\frac{1}{2}(-\tau(\tilde{x}_1 - \tilde{x}_2) + i\sqrt{\tau^2-\rho^2}(\tilde{x}_1 + \tilde{x}_2))}e^{i\rho\tilde x_3}\left[(\widehat\zeta_0\cdot\mathbf a)\widehat\zeta_0+O_\tau(\tau^{-1})\right],\\
    \wt\vh(\tilde x) & =  e^{\frac{1}{2}(-\tau(\tilde{x}_1 - \tilde{x}_2) + i\sqrt{\tau^2-\rho^2}(\tilde{x}_1 + \tilde{x}_2))}e^{i\rho\tilde x_3}\left[(\widehat\zeta_0\cdot\mathbf b)\widehat\zeta_0+O_\tau(\tau^{-1})\right].
\end{split}\]
Here $O_\tau(\tau^{-1})$ denotes vector functions whose $L^2(\Omega)$ norm is bounded with respect to $\tau$.

\subsection{Accelerating beams in the physical space}

In the annulus $\mathcal A$, from (\ref{eqn:pf_mu_eps}), we have for $\tau$ large
\begin{equation*}
\wt\vE(\tilde x) =-i\tau\varepsilon_0^{-1/2}\frac{|\tilde x|}{R}\wt\ve(\tilde x),\quad
\wt\vH(\tilde x) =-i\tau\mu_0^{-1/2}\frac{|\tilde x|}{R}\wt\vh(\tilde x).
\end{equation*}
Note that $\tilde{\textbf{e}}(\tilde{x})$ and $\tilde{\textbf{h}}(\tilde{x})$ are the near planewave parts in the virtual space. 
The other observation is that both $\wt\vE$ and $\wt\vH$ are almost perpendicular to $\tilde x_3$ for $\tau$ large, due to the choice of $\zeta$. 

By \eqref{eqn:DK} and that $K=K^{-1}$, we obtain the solutions to the original Maxwell's equations in free space
\begin{equation}\label{3:kelvinsol}
\begin{split}
\vE(x) & =K_*\wt\vE=-i\tau\varepsilon_0^{-1/2}\frac{R^3}{|x|^3}\left[I-2\widehat r\widehat r^t\right]\wt\ve(K(x)),\\
\vH(x) & =K_*\wt\vH=-i\tau\mu_0^{-1/2}\frac{R^3}{|x|^3}\left[I-2\widehat r\widehat r^t\right]\wt\vh(K(x)).
\end{split}
\end{equation}
The formula suggests the following properties of such a wave.

\begin{enumerate}
\item 
In Figure \ref{Physical_e1} a), the transverse profile $\tilde{\textbf{e}}_1$ of the near plane-wave part at $\tilde x_3=0$ in the virtual space is shown. One notices that the peaks and valleys of the oscillation reside on lines $\tilde x_1 - \tilde x_2=c$ (while the exponential decay is along the perpendicular lines). In the virtual space, these peaks and valleys propagate straight in $\tilde x_3$ direction. These peak and valley planes $\{\tilde x_1 - \tilde x_2=c\}$ are mapped to spheres passing through the origin in the physical space. In Figure \ref{Physical_e1} b), by using Kelvin transform with respect to the sphere of radius $R=5$, we depict the peak propagation due to the factor 
$
     R^3/|x|^3
$ 
(without multiplication by $[I-\widehat r\widehat r^t]$). Note that the ``lobes" feature ``shrinks" as propagating away from the $x_1x_2$-plane $\left\{x_3=0\right\}$ while the intensity increases due to $\frac{R^3}{|x|^3}$. Hence the intensity keeps shifting from the tail towards the main ``lobe", suggesting an overly self-healing due to the wave acceleration.

\item Multiplication by the matrix $[I-2\widehat r\widehat r^t]$, that is, we consider the solutions in (\ref{3:kelvinsol}), which corresponds to a reflection of the electric and magnetic fields and does not change the norm of any real vector. The real and imaginary intensities of the fields are preserved. 
\item The construction applies to a lossy system where the conductivity $\sigma$ is a positive constant, in which case we only need to replace $\varepsilon_0$ by the complex number $\gamma_0=\varepsilon_0 + i \omega^{-1}\sigma_0$. Above self-healing property still exists and compensates the energy loss during the propagation. 
\end{enumerate}


\begin{figure}[h] 
   \begin{minipage}[b]{.4\textwidth}
           (a)\includegraphics[width=2in]{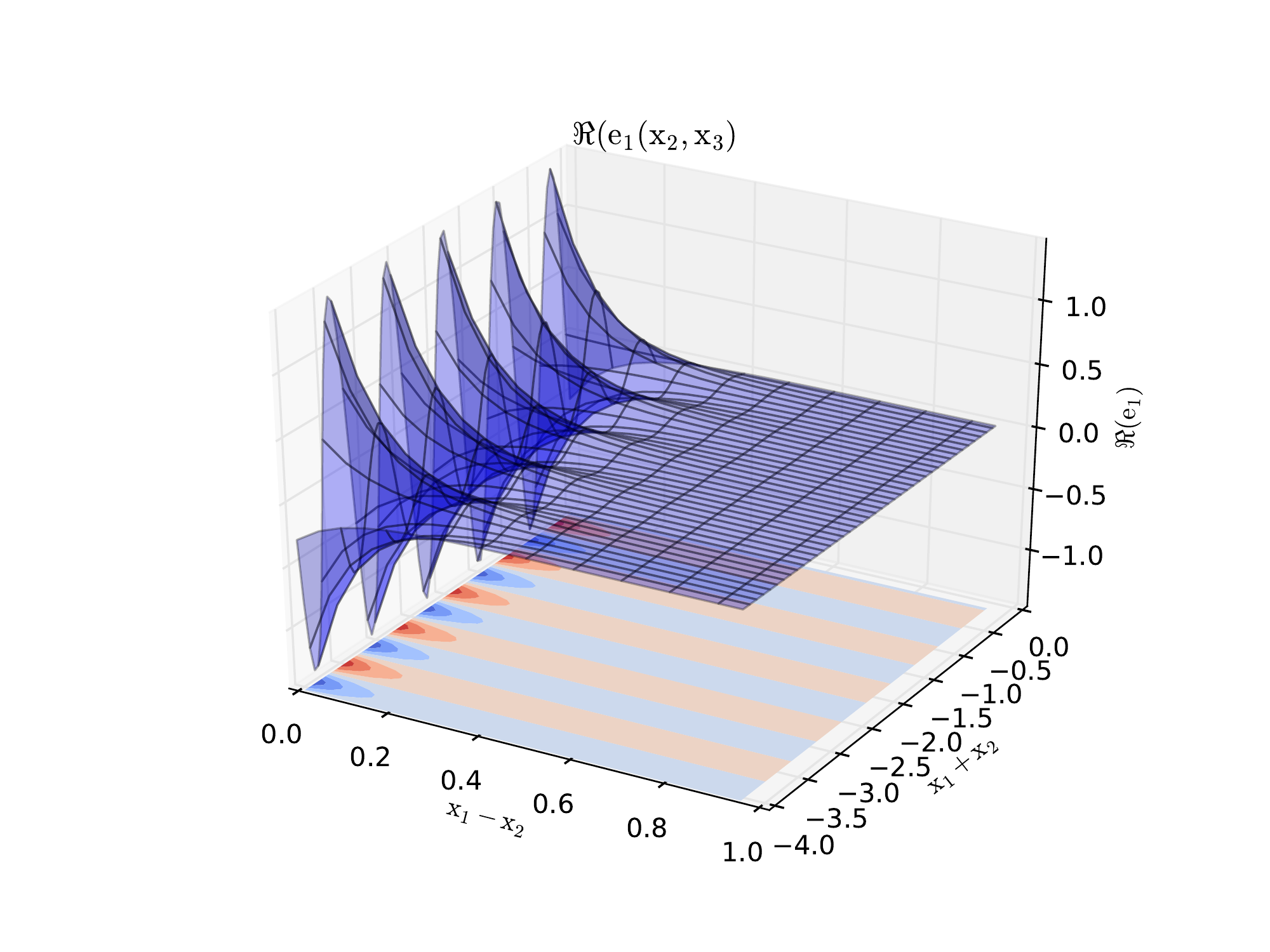}                      
      \end{minipage}
          \hspace{0.8cm}
    \begin{minipage}[b]{.4\textwidth}
      (b)\includegraphics[width=2in]{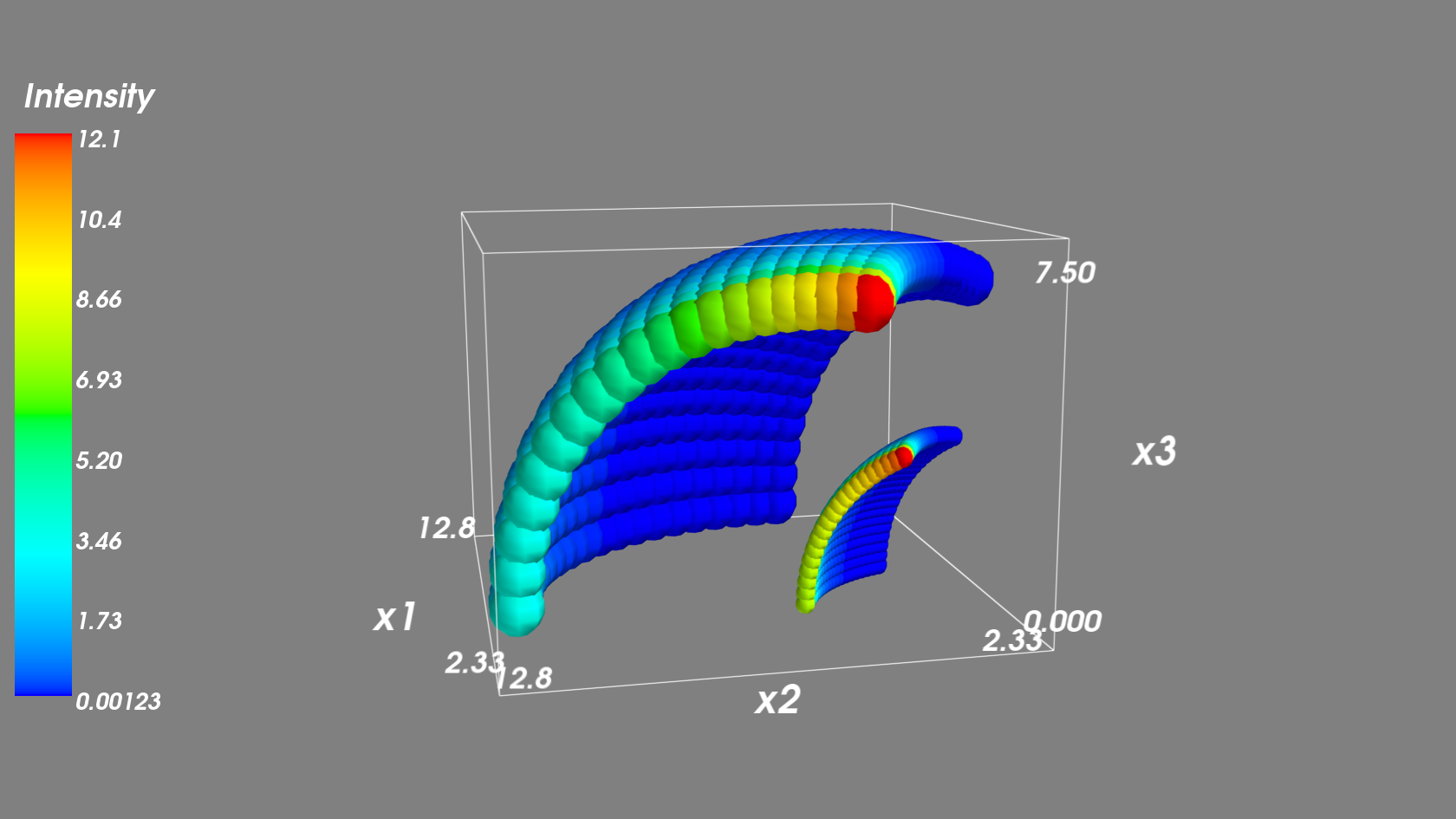}                      
        \end{minipage}
        \caption{(a) The cross section of the near plane-wave $\Re\tilde{{e}}_1(K(x))$ in the virtual space at $\tilde{x}_3=0$. (b) Beams $R^3 |x|^{-3}  \Re{\tilde{e}}_1(K(x))$ with the choice of $\textbf{a} = (-1/\sqrt{2}, -1/\sqrt{2}, 0)^t$, $\tau = 4$ and $\sqrt{\tau^2-\rho^2} = 3$ on the spheres $(x_1-a)^2+(x_2-a)^2+x_3^2 = 2 a^2$ when $a=50/3\pi, 150/17\pi$ in the physical space.}
                 \label{Physical_e1}
\end{figure}


\begin{thebibliography}{99}
\bibitem{AMMK}
     \newblock P. Aleahmad, M.-A. Miri, M. S. Mills, I. Kaminer, M. Segev, and D.-N. Christodoulides, 
     \newblock \emph{Fully vectorial accelerating diffraction-free Helmholtz beams},
     \newblock Phys. Rev. Lett., \textbf{109} (2012), 203902.
     
     
\bibitem{AB}
    \newblock M.-A. Alonso1 and M.-A. Bandres, 
    \newblock \emph{Spherical fields as nonparaxial accelerating waves}, 
    \newblock Opt. Lett., {\bf 37} (2012), 5175--5177.


 
     
\bibitem{APM}
    \newblock C. Ament, P. Polynkin, and J. V. Moloney, 
    \newblock \emph{Supercontinuum generation with femtosecond self-healing Airy pulses}, 
    \newblock Phys. Rev. Lett., {\bf 107} (2011),  243901.



\bibitem{BAKS}
     \newblock M. A. Bandres, M. A. Alonso, I. Kaminer, and M. Segev, 
     \newblock \emph{Three-dimensional accelerating electromagnetic waves},
     \newblock Optics Express,  \textbf{21} (2013), 13917--13929 .

\bibitem{BMD}
    \newblock J. Baumgartl, M. Mazilu, and K. Dholakia, 
    \newblock \emph{Optically mediated particle clearing using Airy wavepackets}, \newblock Nature Photon, {\bf 2} (2008), 675--678.

\bibitem{BNKS}
    \newblock R. Bekenstein, J. Nemirovsky, I. Kaminer, and M. Segev, 
    \newblock \emph{Shape-preserving accelerating electromagnetic wavepackets in curved space}, 
    \newblock Phys. Rev. X, {\bf 4} (2014), 011038.

    
\bibitem{BB}
    \newblock M. V. Berry and N. L. Balazs, 
    \newblock \emph{Nonspreading wave packets}, 
    \newblock Am. J. Phys., {\bf47} (1979), 264--267.

\bibitem{BSDC}
     \newblock J. Broky, G.-A. Siviloglou, A. Dogariu, and D.-N. Christodoulides, \newblock \emph{Self-healing properties of optical Airy beams}, 
     \newblock Optics Express, {\bf 16} (2008), 12880.
 




\bibitem{CRCW}
    \newblock A. Chong, W. H. Renninger, D. N. Christodoulides, and
F. W. Wise, 
    \newblock \emph{Airy-Bessel wave packets as versatile linear light bullets}, \newblock Nature Photon, {\bf4} (2010), 103--106.


\bibitem{FKSU}
    \newblock D. Dos Santos Ferreira, C. E. Kenig, M. Salo, and G. Uhlmann, 
    \newblock \emph{Limiting Carleman weights and anisotropic inverse problems}, \newblock Invent. Math., {\bf 178} (2009), 117--191.


\bibitem{FKSjU}
    \newblock D. Dos Santos Ferreira, C. E. Kenig, J. Sj\"ostrand, and G. Uhlmann, \newblock \emph{Determining a magnetic Schr\"odinger operator from partial Cauchy data}, 
    \newblock Comm. Math. Phys., {\bf 271} (2007), 467--488.





\bibitem{KBNS}
     \newblock I. Kaminer, R. Bekenstein, J. Nemirovsky, and M. Segev, 
     \newblock \emph{Nondiffracting accelerating wave packets of Maxwell's equations},
     \newblock Phys. Rev. Lett.,  \textbf{108} (2012), 163901.

\bibitem{KS}
     \newblock C. Kenig and M. Salo, 
     \newblock \emph{Recent progress in the Calder\'on problem with partial data},
     \newblock Contemporary Math., \textbf{615} (2014), 193-222.

\bibitem{KSU}
     \newblock C. Kenig, M. Salo, and G. Uhlmann, 
     \newblock \emph{Inverse problems for the anisotropic Maxwell equations},
     \newblock Duke Mathematical journal, \textbf{157} (2011), 369--419.

\bibitem{KSjU}
     \newblock C. Kenig, J. Sj\"ostrand, and G. Uhlmann, 
     \newblock \emph{The Calder\'on problem with partial data},
     \newblock Ann. of Math., \textbf{165} (2007), 567–-591.

 
 
\bibitem{NS}
     \newblock A. Nachman and B. Street, 
     \newblock \emph{Reconstruction in the Calderón problem with partial data},
     \newblock Comm. Partial Differential Equations, \textbf{35} (2010), 375--390.
  


 
  

\bibitem{OS}
     \newblock P. Ola and E. Somersalo, 
     \newblock \emph{Electromagnetic inverse problems and generalized Sommerfeld potentials},
     \newblock SIAM J. Appl. Math., \textbf{56} (1996), 1129--1145.
     
\bibitem{PKMSC}
      \newblock P.Polynkin, M.Koleskik, J.V.Moloney, G.A.Siviloglou,
      and D. N. Christodoulides, 
     \newblock \emph{Curved plasma channel generation using ultraintense Airy beams},     
     \newblock Science, {\bf 324} (2009), 229--232.

 
 
\bibitem{Schley}
     \newblock R. Schley, I. Kaminer, E. Greenfield, R. Bekenstein, Y. Lumer, and M. Segev, 
     \newblock \emph{Loss-proof self-accelerating beams and their
     use in non-paraxial manipulation of particles’ trajectories},
     \newblock Nat. Commun., \textbf{5:5189} doi: 10.1038/ncomms6189 (2014).

\bibitem{SC2}
     \newblock G. A. Siviloglou, J. Broky, A. Dogariu, and D. N. Christodoulides, 
     \newblock \emph{Observation of accelerating Airy beams},
     \newblock Phys. Rev. Lett., \textbf{99} (2007), 213901.


\bibitem{SC1}
     \newblock G. A. Siviloglou and D. N. Christodoulides, 
     \newblock \emph{Accelerating finite energy Airy beams},
     \newblock Opt. Lett., \textbf{32} (2007), 979--981.

 
 
\bibitem{SU1}
     \newblock J. Sylvester and G. Uhlmann, 
     \newblock \emph{A global uniqueness theorem for an inverse boundary value problem},
     \newblock Ann. Math., \textbf{125} (1987), 153--169.

\bibitem{U}
     \newblock G. Uhlmann, 
     \newblock \emph{Calder\'on's problem and electrical impedance tomography},
     \newblock Inverse Problems, \textbf{25} (2009), p. 123011.


\end{thebibliography}
\end{document}